\documentclass[11pt]{amsart}
\usepackage[a4paper,margin=1.05in]{geometry}
\usepackage{amsmath,amssymb,amsthm,mathtools}
\usepackage{enumitem}

\usepackage{hyperref}
\usepackage{mathrsfs}
\usepackage{microtype}
\hypersetup{colorlinks=true,linkcolor=blue,citecolor=blue,urlcolor=blue}

\makeatletter
\@namedef{subjclassname@2020}{\textup{2020} Mathematics Subject Classification}

\makeatother

\newtheorem{theorem}{Theorem}[section]
\newtheorem{proposition}[theorem]{Proposition}
\newtheorem{lemma}[theorem]{Lemma}

\theoremstyle{remark}
\newtheorem{remark}[theorem]{Remark}

\newtheorem*{theoremA}{Theorem A}
\newtheorem*{schurtest}{Schur's test}

\newcommand{\D}{\mathbb D}

\newcommand{\Hcal}{\mathcal H}
\newcommand{\B}{\mathrm B}

\newcommand{\norm}[1]{\left\lVert #1\right\rVert}

\title[Hilbert matrix norms]{Hilbert matrix norms on weighted Bergman spaces: even exponents and a counterexample to the beta formula}

\author{Hasi Wulan}
\address{Department of Mathematics, Shantou University, Shantou, Guangdong 515821, People's Republic of China}
\email{wulan@stu.edu.cn}

\author{Mengmeng Zhou}
\address{Department of Mathematics, Shantou University, Shantou, Guangdong 515821, People's Republic of China}
\email{25mmzhou@stu.edu.cn}

\author{Jian-Feng Zhu}
\address{Department of Mathematics, Shantou University, Shantou, Guangdong 515821, People's Republic of China}
\email{flandy@stu.edu.cn}

\date{\today}

\begin{document}

\begin{abstract}
Let $A^p_\alpha$ be the weighted Bergman space on the unit disk, where $\alpha>-1$.  For
\[
 f(z)=\sum_{k=0}^{\infty}a_kz^k\in A^p_\alpha,
\]
consider the Hilbert matrix operator
\[
 \Hcal f(z)=\sum_{n=0}^{\infty}
 \left(\sum_{k=0}^{\infty}\frac{a_k}{n+k+1}\right)z^n
 =\int_0^1\frac{f(t)}{1-tz}\,\mathrm dt.
\]
For even exponents $p=2m$, we prove
\[
 \norm{\Hcal}_{A^{2m}_\alpha\to A^{2m}_\alpha}
 =\B(a,1-a),\qquad a=\frac{\alpha+2}{2m},
\]
whenever $0<a\le m/(2m-1)$.  The same formula holds throughout the admissible range for $p=2,4,6,8,10$.

The beta formula is not valid for all admissible parameters.  Put
\[
 a_0=\frac{800001}{1000000},\qquad \alpha_p=a_0p-2.
\]
For every real $p\ge1100000$,
\[
 \norm{\Hcal}_{A^p_{\alpha_p}\to A^p_{\alpha_p}}
 >\B(a_0,1-a_0).
\]
The counterexample uses the fixed function
\[
 f_0(z)=(1-z^2)^{-4/5}
 =\sum_{k=0}^{\infty}\frac{(4/5)_k}{k!}z^{2k}.
\]
A rigorous interval estimate at $p=1100000$, followed by monotonicity in $p$, gives the whole half-line.  In particular, failure occurs for every even $p=2m$ with $m\ge550000$.

\end{abstract}

\subjclass[2020]{Primary 47B38, 30H20; Secondary 47A30, 26D15, 33C05.}
\keywords{Hilbert matrix operator, weighted Bergman spaces, Schur's test, Boyd's method, sharp norm, growth spaces, counterexample, hypergeometric functions, even exponents.}

\maketitle

\section{Introduction and main results}

Let $\mathrm{d}A(z)=\pi^{-1}\,\mathrm{d}x\,\mathrm{d}y$ be normalized area measure on the unit disk $\D$.  For $\alpha>-1$ set
\[
  \mathrm{d}A_\alpha(z)=(\alpha+1)(1-|z|^2)^\alpha \mathrm{d}A(z),
\]
and let $A^p_\alpha$ be the analytic weighted Bergman space with norm
\[
  \norm{f}_{A^p_\alpha}^p=\int_\D |f(z)|^p\,\mathrm{d}A_\alpha(z).
\]
For a polynomial $f(z)=\sum_{k\ge0}a_kz^k$, the Hilbert matrix operator is
\[
  \Hcal f(z)
  =\sum_{n=0}^{\infty}\left(\sum_{k=0}^{\infty}\frac{a_k}{n+k+1}\right)z^n
  =\int_0^1 \frac{f(t)}{1-tz}\,\mathrm{d}t.
\]
The natural boundedness range is $p>\alpha+2$.  Write
\[
   a=\frac{\alpha+2}{p}\in(0,1).
\]

In 2018, Karapetrovi\'c proposed the norm identity \cite{Karapetrovic2018}
\begin{equation}
  \label{eq:K-formula}
  \norm{\Hcal}_{A^p_\alpha\to A^p_\alpha}
  =\B(a,1-a)
  =\frac{\pi}{\sin(\pi a)}
  =\frac{\pi}{\sin((\alpha+2)\pi/p)}.
\end{equation}
The boundary singularity already determines the usual lower bound.  Indeed, $(1-z)^{-a}$ has the critical homogeneity at $1$, and the integral representation of $\Hcal$ gives
\[
   \int_0^1 t^{a-1}(1-t)^{-a}\,\mathrm{d}t=\B(a,1-a).
\]
The reverse inequality is the difficult part.  It can fail if a profile away from the single boundary point gives a larger quotient.

For $\alpha=0$, formula \eqref{eq:K-formula} was proved by Bo\v{z}in and Karapetrovi\'c \cite{BozinKarapetrovic2018}.  Karapetrovi\'c obtained the lower bound and several weighted estimates in \cite{Karapetrovic2018,Karapetrovic2021}.  The weighted-composition approach of Lindstr\"om, Miihkinen, and Wikman gives further sharp estimates \cite{LindstromMiihkinenWikman2021}.  More recent parameter ranges were found by Dai \cite{Dai2024} and by Bao, Tian, and Wulan \cite{BaoTianWulan2026}.  A large high-weight region is still beyond the pointwise estimates used in those papers.

Our positive results concern the even exponents.  If $p=2m$, the substitution $G=f^m$ changes the upper estimate into a square-function problem on $A^2_\alpha$.  After truncation, Schur's test applies to a finite positive matrix.  The required row inequalities are hypergeometric moment inequalities.  This gives the beta value for $p=2$ and for a range which is uniform in $m$; for $p=4,6,8,10$, the resulting estimates cover every admissible weight.

There is also an obstruction to a global formula.  The test function $(1-z^2)^{-4/5}$, evaluated near the boundary, gives a quotient larger than the beta value along the ray
\[
 a=a_0=\frac{800001}{1000000},
\]
for all real $p\ge1100000$.  The endpoint comes from an interval estimate and is not expected to be optimal.  Notice that this ray lies outside the uniform positive range above.  In particular, the counterexample says nothing about the unsettled part of $p=12$.

\subsection{Main results}
We state the counterexample first.

\begin{theorem}
\label{thm:counterexample}
Put
\[
 a_0=\frac{800001}{1000000},
 \qquad p_0=1100000.
\]
For every real $p\ge p_0$, let
\[
 \alpha_p=a_0p-2.
\]
Then $(p,\alpha_p)$ is admissible and
\[
 \norm{\Hcal}_{A_{\alpha_p}^{p}\to A_{\alpha_p}^{p}}
 >
 \B(a_0,1-a_0).
\]
In particular, \eqref{eq:K-formula} fails for every even exponent
\[
 p=2m,\qquad m\ge550000,
\]
at the corresponding weight $\alpha_{2m}=2a_0m-2$.
\end{theorem}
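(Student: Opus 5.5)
We will prove the theorem by exhibiting one test function whose Rayleigh-type quotient $\norm{\Hcal f}_{A^p_\alpha}/\norm{f}_{A^p_\alpha}$ beats the beta value. Admissibility is immediate: $\alpha_p=a_0p-2>-1$ once $p\ge p_0$, and $a=(\alpha_p+2)/p=a_0\in(0,1)$, so $p>\alpha_p+2$. For the strict inequality we use the fixed function $f_0(z)=(1-z^2)^{-4/5}$. The heuristic is as follows. For large $p$ and weight $\alpha_p=a_0p-2$, the measure $|f|^p\,\mathrm dA_{\alpha_p}$ concentrates where $|f(z)|^p(1-|z|^2)^{a_0p}$ is largest. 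In the limit the quotient tends to a ratio of growth-space (sup) norms:
\[
 \lim_{p\to\infty}\frac{\norm{\Hcal f}_{A^p_{\alpha_p}}}{\norm{f}_{A^p_{\alpha_p}}}
 =\frac{\sup_{z\in\D}(1-|z|^2)^{a_0}|\Hcal f(z)|}{\sup_{z\in\D}(1-|z|^2)^{a_0}|f(z)|}.
\]
Since $4/5<a_0$, the weighted function $(1-|z|^2)^{a_0}|f_0(z)|$ is bounded. It vanishes at the boundary singularities $\pm1$, so its supremum is attained inside $\D$. The quantity $(1-|z|^2)^{a_0}|\Hcal f_0(z)|$, on the other hand, picks up contributions from the singularity at $t=1$ and from the global shape of $f_0$ on $[0,1]$. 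The expectation is that this second ratio strictly exceeds $\B(a_0,1-a_0)$.

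\textbf{Step 1: a numerical growth inequality.} We first verify
\[
 (1-x_*^2)^{a_0}|\Hcal f_0(x_*)|>\B(a_0,1-a_0)\,\sup_{z}(1-|z|^2)^{a_0}|f_0(z)|
\]
at a suitable real point $x_*$ near $1$. By symmetry and maximum-modulus considerations, both suprema are approached along the real axis. The left side is
\[
 \int_0^1 (1-t^2)^{-4/5}(1-tx_*)^{-1}\,\mathrm dt.
\]
This can be written as a hypergeometric/Appell-type integral and evaluated with rigorous interval bounds. The right-hand supremum can be computed in closed form: along $(0,1)$ it equals $\sup_r(1-r^2)^{a_0-4/5}$, which one checks directly over $\D$. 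This gives a concrete margin $\delta>0$.

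\textbf{Step 2: a quantitative finite-$p$ version.} The limit above is not enough; we need explicit bounds at $p=p_0$. For the lower bound on the numerator, restrict the integral defining $\norm{\Hcal f_0}^p$ to a small disk $D(x_*,\rho)$. There $|\Hcal f_0|(1-|z|^2)^{a_0}$ stays above (value at $x_*$)$\cdot(1-\eta)$, by a Lipschitz estimate on $\Hcal f_0$, which is an explicit integral. The disk's area under $\mathrm dA_{\alpha_p}$, normalized, costs a factor like $(c\rho^2 p)^{1/p}$. For the upper bound on the denominator, use $\norm{f_0}_{A^p_\alpha}^p\le \sup\big((1-|z|^2)^{a_0}|f_0|\big)^{p-q}\int|f_0|^{q}(1-|z|^2)^{a_0(p-q)}\cdots$, or compute $\norm{f_0}^p$ exactly via the series $\sum\frac{(4/5)_k}{k!}z^{2k}$ and Laplace-method bounds. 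Both sides then differ from the sup ratio by factors $1+O(\log p/p)$. With an interval-arithmetic evaluation at $p_0=1100000$, this gives the strict inequality there.

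\textbf{Step 3: monotonicity in $p$, and the main obstacle.} To pass from $p_0$ to all $p\ge p_0$, we show that the lower bound for the quotient obtained in Step 2, as an explicit function of $p$, is nondecreasing. The error factors are of the form $(\text{const}\cdot p)^{\pm1/p}$-type terms that improve as $p$ grows. Alternatively, one can use that $p\mapsto\norm{g}_{L^p(\mu_p)}$ ratios behave monotonically after normalizing; this is what the abstract calls monotonicity in $p$. I expect Step 2 to be the main obstacle. The gap $\B(a_0,1-a_0)$ versus the achieved ratio is tiny, since $a_0$ was tuned just past a threshold near $4/5$. Because of this, the Laplace-type error terms must be controlled with fully explicit constants and certified interval arithmetic, and this is exactly why $p_0$ is so large. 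The even-exponent statement then follows by taking $p=2m$ with $2m\ge p_0$.
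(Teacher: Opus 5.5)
Your plan works and has the same skeleton as the paper's proof. You use the same $f_0$, a real point $r$ near $1$ where $(1-r^2)^{a_0}\Hcal f_0(r)$ exceeds $\B(a_0,1-a_0)$ by a relative margin of about $10^{-4}$ (the paper takes $r=999/1000$), an upper bound on $\norm{f_0}_{A^p_{\alpha_p}}$, one certificate at $p_0$, and monotonicity in $p$.

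Your Step~2 is where you differ, and the paper's version is much lighter.

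\textbf{Numerator.} The paper does not localize on a disk. It uses the exact point-evaluation inequality $\norm{h}_{A^p_\alpha}\ge(1-|z|^2)^{(\alpha+2)/p}|h(z)|$, proved by composing with the weighted automorphism $U_z$ (an isometry of $A^p_\alpha$) and applying subharmonicity at $0$. Since $(\alpha_p+2)/p=a_0$, this bounds the numerator below by $(1-r^2)^{a_0}\Hcal f_0(r)$ with no loss. No Lipschitz constant, no $\eta$, and no area factor appear.

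\textbf{Denominator.} The paper uses only $|1-z^2|\ge1-|z|^2$. This gives $\norm{f_0}^p_{A^p_{\alpha_p}}\le(a_0p-1)/(dp-1)$ with $d=a_0-\tfrac45=10^{-6}$. That is your first option with $q=p$. Note that after extracting $[(1-|z|^2)^{a_0}|f_0|]^{p-q}$, the remaining weight is $(1-|z|^2)^{a_0q-2}$, not $(1-|z|^2)^{a_0(p-q)}$.

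\textbf{Monotonicity.} In the paper, all $p$-dependence sits in the single factor $G(p)=\bigl((dp-1)/(a_0p-1)\bigr)^{1/p}$. It is increasing by a one-line derivative computation.

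Your disk version can also be made rigorous. With $\rho$ of order $10^{-8}$, the losses from $\eta$ and from $\bigl((a_0p-1)\rho^2\bigr)^{1/p}$ are of order $10^{-5}$, and together with the input loss they still fit under the margin. However, it requires explicit derivative bounds on $\Hcal f_0$ near $r$, and a monotonicity check for each factor.

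Two cautions.

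First, the Laplace-concentration heuristic is misleading for the denominator. On the real diameter, $\bigl((1-x^2)^{a_0}f_0(x)\bigr)^p=(1-x^2)^{dp}$, and $dp\approx1$ at $p_0$. So the mass of $|f_0|^p\,\mathrm dA_{\alpha_p}$ is spread out along the diameter rather than concentrated at the maximizer $z=0$. In the paper, the size of $p_0$ is forced by two things: the requirement $dp>1$, and the loss $\frac1p\log\frac{a_0p-1}{dp-1}\approx1.5\cdot10^{-5}$ staying below the margin.

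Second, your alternative appeal to a general monotonicity of normalized $L^p(\mu_p)$ ratios is unsupported, because the measures change with $p$. The monotonicity has to come from an explicit bound such as $G(p)$.
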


The number $1100000$ is only a convenient rigorous threshold.  We do not know the least real, or even, exponent for which the formula fails.

We next record the positive results, beginning with the Hilbertian case.

\begin{theorem}
\label{thm:p2}
For every $-1<\alpha<0$,
\[
  \norm{\Hcal}_{A^2_\alpha\to A^2_\alpha}
  =\B\!\left(\frac{\alpha+2}{2},1-\frac{\alpha+2}{2}\right)
  =\frac{\pi}{\sin((\alpha+2)\pi/2)}.
\]
\end{theorem}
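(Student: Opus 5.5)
The lower bound $\norm{\Hcal}\ge \B(a,1-a)$ with $a=(\alpha+2)/2\in(1/2,1)$ is the known one, obtained from test functions $(1-z)^{-a+\varepsilon}$ as recalled after \eqref{eq:K-formula}. So the plan is to prove the upper bound. I would work with coefficients. For $f=\sum a_kz^k$ we have $\norm{f}_{A^2_\alpha}^2=\sum_k w_k|a_k|^2$, where
\[
w_k=\frac{\Gamma(k+1)\Gamma(\alpha+2)}{\Gamma(k+\alpha+2)}\asymp (k+1)^{-(\alpha+1)}.
\]
Thus $\Hcal$ is unitarily equivalent on $\ell^2$ to the positive matrix
\[
M_{nk}=\frac{w_n^{1/2}w_k^{1/2}}{n+k+1}.
\]
Since the matrix is positive and symmetric, it suffices by density of polynomials to bound every finite truncation $(M_{nk})_{n,k\le N}$ uniformly in $N$.

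For this I would apply Schur's test with the test vector $h_k=w_k^{-1/2}c_k$, where $c_k$ are the Taylor coefficients of $(1-z)^{-b}$, that is $c_k=(b)_k/k!$, for a parameter $b$ to be tuned. The test inequality we need is
\[
\sum_k M_{nk}h_k\le \B(a,1-a)\,h_n,
\]
which after cancelling the weights becomes
\[
\sum_k \frac{w_k\,c_k}{n+k+1}\le \B(a,1-a)\,\frac{c_n}{w_n}.
\]
The left side has an integral form. Writing $1/(n+k+1)=\int_0^1t^{n+k}\,\mathrm dt$ and $w_kc_k$ as a product of Pochhammer ratios gives
\[
\sum_k w_kc_kt^k={}_2F_1(b,1;\alpha+2;t).
\]
Hence the row sum is the moment $\int_0^1 t^n\,{}_2F_1(b,1;\alpha+2;t)\,\mathrm dt$, and the required row inequality is a hypergeometric moment inequality uniform in $n$.

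The natural choice is $b$ with $b-(\alpha+1)=\ldots$ chosen so that the asymptotics match the critical homogeneity. Near $t=1$ we have ${}_2F_1(b,1;\alpha+2;t)\sim C(1-t)^{\alpha+1-b}$. I would choose $b$ so that the $n\to\infty$ limit of the ratio equals exactly $\B(a,1-a)$. This is the extremal-profile choice, with $c_n/w_n\asymp n^{b-1+\alpha+1}$ matched against $\int_0^1 t^n(1-t)^{\alpha+1-b}\,\mathrm dt\sim\Gamma(\alpha+2-b)\,n^{b-\alpha-2}$. Once the exponents are equated, the constant works out to a product of Gamma values equal to $\pi/\sin(\pi a)$, by the reflection formula.

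The main obstacle is showing that the ratio of the moment to $c_n/w_n$ is bounded by its limit for every $n\ge0$, and not only asymptotically. I would try to prove that the ratio is monotone increasing in $n$. The first route is to express it as a ratio of Gamma functions times a terminating or Euler-integral expression. A second route is to use Euler's transformation ${}_2F_1(b,1;\alpha+2;t)=(1-t)^{\alpha+1-b}\,{}_2F_1(\alpha+2-b,\alpha+1;\alpha+2;t)$; here the second factor is increasing with a finite limit at $t=1$ in the range $-1<\alpha<0$, which should give a clean majorant. The restriction $\alpha<0$, that is $a<1$ with $a$ close enough to $1/2$ from above, should be exactly what makes the relevant hypergeometric coefficients have the right sign. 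If monotonicity fails at small $n$, I would check the first few rows directly.
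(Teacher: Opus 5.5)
Your reduction to a symmetric matrix is incorrect, and the rest of the plan inherits the error.

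In the orthonormal basis $e_n=z^n/\sqrt{w_n}$ one has $\langle\Hcal e_k,e_n\rangle=\sqrt{w_n/w_k}\,/(n+k+1)$. So $\Hcal$ is represented by $K_{nk}=(w_n/w_k)^{1/2}/(n+k+1)$, not by $M_{nk}=(w_nw_k)^{1/2}/(n+k+1)$. In fact $\Hcal$ is not self-adjoint on $A^2_\alpha$ for $\alpha>-1$: $\langle\Hcal z^k,z^n\rangle=w_n/(n+k+1)$, whereas $\langle z^k,\Hcal z^n\rangle=w_k/(n+k+1)$. Hence it is not unitarily equivalent to any symmetric matrix.

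Your $M$ is the matrix of $\Hcal$ from the space normed by $\sum_k|a_k|^2/w_k$ into $A^2_\alpha$. Because $w_k\le 1\le 1/w_k$, this only gives $\norm{M}\le\norm{\Hcal}_{A^2_\alpha\to A^2_\alpha}$, which is the wrong direction for an upper bound. So the one-weight symmetric Schur test is not available. You need both a row and a column inequality, as in the two-weight test the paper uses.

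The confusion also shows in your computation. With your $M$ and $h$, the row inequality would be $\sum_k c_k/(n+k+1)\le\B(a,1-a)\,c_n/w_n$, not the displayed one. Moreover, the two sides of the displayed inequality grow like $n^{b-\alpha-2}$ and $n^{b+\alpha}$. These exponents differ by $2(\alpha+1)>0$ for every $b$, so the step ``choose $b$ so that the exponents are equated'' cannot be carried out.

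What works, and what the paper does, is the two-weight Schur test for $K$ with $h_n=w_n^{1/2}(a)_n/n!$ in both inequalities (recall $2a=\alpha+2$). The sign of the exponent matters. With your $w_n^{-1/2}$, the row sums of $K$ are $w_n^{1/2}\sum_k c_k/(w_k(n+k+1))$, which diverge as soon as $b+\alpha\ge0$; for $b=a$ this happens whenever $\alpha\ge-2/3$.

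With the correct weight, the row inequality becomes $\B(n+1,1-a)\le\B(a,1-a)(a)_n/n!$, i.e.\ $\Gamma(n+1)^2\le\Gamma(n+a)\Gamma(n+2-a)$. The column inequality is
\[
\int_0^1x^k\,{}_2F_1(a,1;2a;x)\,\mathrm dx\le\B(a,1-a)\,\frac{(a)_k}{(2a)_k}.
\]
This is your left-hand side with $b=a$, but with $\B(a,1-a)\,w_kc_k$ on the right rather than $\B(a,1-a)\,c_k/w_k$. Now the exponents agree for every $b$, and within this family $b=a$ is forced. The limiting constants of the two inequalities are $\pi/\sin(\pi b)$ and $\pi/\sin(\pi(2a-b))$, and both are at most $\pi/\sin(\pi a)$ only for $b=a$.

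Once this corrected inequality is in place, your Euler-transformation majorant finishes it faster than the paper's monotonicity argument for $R_k$. The function $\Theta(x)={}_2F_1(a,2a-1;2a;x)$ increases to $\Theta(1)=\Gamma(2a)\Gamma(1-a)/\Gamma(a)$, so the left side is at most $\Theta(1)\B(k+1,a)$. The claim then reduces to $\Gamma(k+1)\Gamma(k+2a)\le\Gamma(k+a)\Gamma(k+1+a)$. This is log-convexity of $\Gamma$, because $k+a<k+1<k+2a<k+1+a$ and both pairs have the same sum. Note also that positivity of the coefficients of $\Theta$ comes from $2a-1=\alpha+1>0$, not from $a$ being close to $1/2$.
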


The $p=2$ proof uses two Schur inequalities on $\ell^2$.  The first follows from log-convexity of the gamma function.  The second is a hypergeometric moment inequality, proved by monotonicity.

The same method gives a range that is uniform in $m$.

\begin{theorem}
\label{thm:uniform}
Let $m\ge2$, $p=2m$, $\alpha>-1$, and
\[
   a=\frac{\alpha+2}{2m}.
\]
If
\[
   0<a\le \frac{m}{2m-1},
\]
then
\[
  \norm{\Hcal}_{A^{2m}_\alpha\to A^{2m}_\alpha}
  =\B(a,1-a)
  =\frac{\pi}{\sin(\pi a)}.
\]
Equivalently,
\[
  -1<\alpha\le \frac{2(m-1)^2}{2m-1}.
\]
\end{theorem}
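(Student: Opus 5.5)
The lower bound $\norm{\Hcal}\ge\B(a,1-a)$ is the standard one recalled in the introduction. Test on $f_\varepsilon(z)=(1-z)^{-a+\varepsilon}$, normalized in $A^{2m}_\alpha$, and let $\varepsilon\to0$. Near $z=1$ the integral representation shows that $\Hcal f_\varepsilon$ behaves like $\B(a,1-a)f_\varepsilon$ up to lower-order terms, so only the upper bound needs work.

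For the upper bound I plan the even-exponent reduction announced in the introduction. Write $p=2m$ and, for a nice $f$ (a polynomial, say), set $G=f^m$. The idea is to dominate $|\Hcal f|^m$ by an $A^2_\alpha$ quantity built from $G$. Apply H\"older to the integral $\int_0^1 f(t)(1-tz)^{-1}\,\mathrm dt$ against the weight $t^{a-1}(1-t)^{-a}$, normalized by $\B(a,1-a)$. This gives
\[
|\Hcal f(z)|^m\le \B(a,1-a)^{m-1}\int_0^1 |f(t)|^m\,\frac{t^{(a-1)(1-m)}(1-t)^{a(m-1)}}{|1-tz|^m}\,\mathrm dt .
\]
The right side is a positive kernel operator $T$ applied to $|G(t)|$ on $[0,1]$. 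Since $\norm{\Hcal f}_{A^{2m}_\alpha}^{2m}=\norm{(\Hcal f)^m}_{A^2_\alpha}^2$, it suffices to show
\[
\norm{T|G|}_{L^2(dA_\alpha)}\le \B(a,1-a)\,\norm{G}_{A^2_\alpha}.
\]
Here $\norm{G}_{A^2_\alpha}$ is controlled through the restriction to $[0,1]$ by a Hardy/Fej\'er--Riesz type inequality for radial segments. Alternatively, expand $(1-tz)^{-m}$ in powers $z^n$ and $G$ in Taylor coefficients. Then everything becomes a positive matrix acting on $\ell^2$ with weights $\|z^n\|_{A^2_\alpha}^2=\frac{n!\,\Gamma(\alpha+2)}{\Gamma(n+\alpha+2)}$. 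After truncating to finitely many coefficients, Schur's test applies.

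For Schur's test I would use power test vectors $h_n=\frac{\Gamma(n+\gamma)}{n!}$ with $\gamma$ tuned to $a$, imitating the $p=2$ argument of Theorem~\ref{thm:p2}. One row inequality reduces, via $\Gamma$-ratios, to log-convexity of $\Gamma$. The other becomes a hypergeometric moment inequality of the form
\[
\sum_k c_{n,k}h_k\le \B(a,1-a)\,h_n ,
\]
which I would evaluate as a ${}_3F_2$ or ${}_2F_1$ value at $1$. The plan is to show that the ratio is monotone in $n$, with limit $\B(a,1-a)$ as $n\to\infty$.

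The main obstacle is this monotonicity, and it is where the restriction $a\le m/(2m-1)$ enters. The exponent $a(m-1)-(a-1)\cdots$ produced by H\"older must keep the kernel integrable and keep the moment sequence log-concave. I expect the sign of a certain coefficient, of the form $m-(2m-1)a$, to decide the monotone direction. I would verify this by differentiating the integral representation of the ratio in $n$ and checking that the resulting Beta-type integrand has a definite sign exactly when $(2m-1)a\le m$. Finally, density of polynomials removes the truncation, and the equivalent range for $\alpha$ follows from $\alpha=2ma-2$.
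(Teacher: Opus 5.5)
Your reduction step fails, and it fails by more than a loss in the constant. You apply H\"older pointwise in the variable $t$ of $\int_0^1 f(t)(1-tz)^{-1}\,\mathrm dt$, with the $z$-independent weight $w(t)=t^{a-1}(1-t)^{-a}$. The resulting majorant $T|G|$ is not bounded from $A^2_\alpha$ into $L^2(\mathrm dA_\alpha)$ at all, so the inequality $\norm{T|G|}_{L^2(\mathrm dA_\alpha)}\le\B(a,1-a)\norm{G}_{A^2_\alpha}$ that you reduce to is false.

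To see this, take $f=(1-z)^{-a+\varepsilon}$, so that $G=f^m=(1-z)^{-m(a-\varepsilon)}\in A^2_\alpha$. Then $|G(t)|\,w(t)^{1-m}=t^{(1-a)(m-1)}(1-t)^{-a+m\varepsilon}$. Since $|1-tz|\le(1-t)+|1-z|$, near $z=1$ we get
\[
 T|G|(z)\gtrsim\int_{1/2}^1\frac{(1-t)^{-a+m\varepsilon}}{\bigl((1-t)+|1-z|\bigr)^m}\,\mathrm dt\asymp|1-z|^{-(m-1+a-m\varepsilon)}.
\]
By the computation in Lemma~\ref{lem:singularity}, this is square integrable against $\mathrm dA_\alpha$ only if $m-1+a-m\varepsilon<ma$, i.e.\ $m\varepsilon>(m-1)(1-a)$. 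That fails for small $\varepsilon$. Dilation, Taylor truncation and Fatou's lemma then make the ratio unbounded on polynomials as well.

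The loss is already visible on the extremal profile. For $f=(1-z)^{-a}$, the function $f(t)/\bigl((1-tz)w(t)\bigr)=t^{1-a}/(1-tz)$ is far from constant in $t$. So H\"older is never close to equality, and it replaces $|\Hcal f|^m\asymp|1-z|^{-ma}$ by a majorant of size $|1-z|^{-(m-1+a)}$.

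Your fallback does not rescue this. $|G(t)|$ is not linear in the Taylor coefficients of $G$, and $|1-tz|^{-m}$ is not analytic in $z$, so there is no coefficient matrix on which to run Schur's test. The monotonicity controlled by $m-(2m-1)a$ is only asserted, although that is indeed the right threshold.

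The missing idea is to place the weight in a variable that moves with $z$. The paper writes $\Hcal f=\int_0^1T_yf\,\mathrm dy$ with $T_yf(z)=(1-(1-y)z)^{-1}f\bigl(y/(1-(1-y)z)\bigr)$. Then $(T_yf)^m=S^{(m)}_y(f^m)$ is again analytic. For $f=(1-z)^{-a}$ one has $T_yf(z)\,(1-z)^{a}\to\psi(y)=y^{a-1}(1-y)^{-a}$ as $z\to1$, so $\psi$ is the asymptotically sharp weight.

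The argument then runs as follows:
\begin{enumerate}[label=(\roman*)]
\item Minkowski in $A^{2m}_\alpha$ gives $\norm{\Hcal f}_{A^{2m}_\alpha}\le\int_0^1\norm{S^{(m)}_yG}_{A^2_\alpha}^{1/m}\,\mathrm dy$.
\item The tangent inequality $X^q\le q\Lambda^{q-1}X+(1-q)\Lambda^q$, with $q=1/(2m)$ and $\Lambda=\psi(y)^{2m}\norm{G}_{A^2_\alpha}^2$, linearizes this into the quadratic-form bound $L\le\B(a,1-a)D$, where $L=\int_0^1\psi^{1-2m}B_y\,\mathrm dy$.
\item The Schur vector is $h_j=(m)_j/j!$, the coefficients of $(1-z)^{-m}$, rather than gamma power vectors.
\item The row inequalities are not obtained by monotonicity. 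After Euler's integral and the substitution $s=y/(1-(1-y)t)$, they follow from the pointwise bound $J_{m,a}(s)\le\B(a,1-a)(1-s)^{a-1}$ (which uses $m\ge2$) together with $(2m-1)(1-a)\ge m-1$, i.e.\ $a\le m/(2m-1)$.
\end{enumerate}
Since Euler's integral requires $2ma>m$, the range $a\le1/2$ is handled separately by the affine-composition estimate.
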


For the first four non-Hilbertian even exponents, no restriction beyond admissibility is needed.

\begin{theorem}
\label{thm:small-even}
For $m=2,3,4,5$ (that is, $p=4,6,8,10$) and every
\[
   -1<\alpha<2m-2,
\]
one has
\[
  \norm{\Hcal}_{A^{2m}_\alpha\to A^{2m}_\alpha}
  =\B\!\left(\frac{\alpha+2}{2m},1-\frac{\alpha+2}{2m}\right)
  =\frac{\pi}{\sin((\alpha+2)\pi/(2m))}.
\]
\end{theorem}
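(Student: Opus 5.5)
The lower bound $\norm{\Hcal}\ge \B(a,1-a)$ is the known one recalled in the introduction, coming from test functions $(1-z)^{-a+\varepsilon}$ as $\varepsilon\downarrow0$. So Theorem~\ref{thm:small-even} reduces to the upper bound. The plan is to reuse the machinery behind Theorem~\ref{thm:uniform}, which already settles $0<a\le m/(2m-1)$. What remains is the range
\[
\frac{m}{2m-1}<a<1,
\]
for each $m=2,3,4,5$ separately.

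For the upper bound, write $p=2m$ and substitute $G=f^m$. This converts $\norm{\Hcal f}_{A^{2m}_\alpha}^{2m}=\norm{(\Hcal f)^m}_{A^2_\alpha}^2$ into a quadratic form in the Taylor coefficients of $G$. The $m$-fold product
\[
(\Hcal f)^m(z)=\int_{[0,1]^m}\prod_{j}\frac{f(t_j)}{1-t_jz}\,\mathrm dt
\]
is controlled via the square-function estimate used for Theorem~\ref{thm:uniform}. After truncation, one gets a finite positive matrix $K=(K_{nk})$ on weighted $\ell^2$, and the task is to show $\norm{K}\le \B(a,1-a)^m$ in the appropriate normalization.

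This is done by Schur's test with power test vectors $h_k\asymp(k+1)^{-s}$ (or Gamma ratios $\Gamma(k+\sigma)/\Gamma(k+1)$). The free exponent $s$ is to be chosen in the interior of the interval where both row and column sums converge. The two Schur inequalities then become hypergeometric moment inequalities of the form
\[
\sum_k K_{nk}h_k\le \B(a,1-a)^m h_n,
\]
with equality in the $n\to\infty$ limit. As in the $p=2$ case, the first inequality should follow from log-convexity of $\Gamma$. The second should follow from monotonicity in $n$ of a ratio of ${}_3F_2$-type sums.

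In the uniform theorem, the restriction $a\le m/(2m-1)$ presumably arises from fixing one particular choice of $s$, for instance $s$ tied linearly to $a$, which only works when that choice lies in the admissible window. For small $m$, the plan is instead to let $s=s(a,m)$ vary, choosing it so that both monotonicity arguments survive up to $a\to1$. One then verifies the resulting one-variable inequalities explicitly for $m=2,3,4,5$. Those inequalities are polynomial or Gamma-ratio inequalities in $(n,a)$, and for fixed small $m$ the number of terms in the combinatorial expansion of $f^m$ is bounded. The verification can therefore be done by sign analysis of explicit finite expressions: derivative in $n$, then a check at $n=0$ and at $n\to\infty$.

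The main obstacle is the second Schur inequality near the top endpoint $a\to1$. There the weight $\alpha$ approaches $2m-2$, the margin in the moment inequality shrinks, and the counterexample of Theorem~\ref{thm:counterexample} shows that for large $m$ it genuinely fails. I would first try the natural choice $s=a$ or $s=1-a/2$ and inspect the monotonicity of the ratio at small $n$. Where a single exponent fails, I would split $a\in(m/(2m-1),1)$ into finitely many subintervals, using a different $s$ on each. On each subinterval, I would certify the finite-$n$ cases directly and handle large $n$ by an asymptotic expansion with explicit error terms. This case-by-case certification is why the statement stops at $m=5$.
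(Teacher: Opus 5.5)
Your lower bound and the reduction to $m/(2m-1)<a<1$ via Theorem~\ref{thm:uniform} are fine, but the upper-bound argument fails at its first step. The map $f\mapsto(\Hcal f)^m$ does not factor linearly through $G=f^m$, because $\Hcal$ is not multiplicative. Already for $m=2$ one has $1\cdot z^2=z\cdot z$, yet $(\Hcal 1)(0)\,(\Hcal z^2)(0)=1/3$ while $\bigl((\Hcal z)(0)\bigr)^2=1/4$. By polarization, $(\Hcal f)^2$ is not a linear image of $f^2$. Nor is $\norm{(\Hcal f)^2}_{A^2_\alpha}^2$ a Hermitian form in the Taylor coefficients of $f^2$. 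So the matrix $K$ you want to feed into Schur's test does not exist.

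The paper gets linearity in $G$ only after writing $\Hcal f=\int_0^1T_yf\,\mathrm dy$ with weighted composition operators $T_y$ and applying Minkowski's inequality, since $(T_yf)^m=S^{(m)}_y(f^m)$. Even this gives $\int_0^1(c^*B_yc)^{1/(2m)}\,\mathrm dy\le\B(a,1-a)(c^*Dc)^{1/(2m)}$, which is not a single quadratic form. What collapses it to one symmetric positive matrix $L$, with target $L\le\B(a,1-a)D$, is the tangent inequality $X^q\le q\Lambda^{q-1}X+(1-q)\Lambda^q$. Here $q=1/(2m)$, $\Lambda=\psi(y)^{2m}c^*Dc$ and $\psi(y)=y^{a-1}(1-y)^{-a}$. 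Because $L$ is symmetric, one Schur row inequality suffices; the log-convexity/column-sum pair you borrow from $p=2$ has no role here.

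Even with a correct reduction, the range the theorem adds is left without a mechanism. Your diagnosis of the restriction $a\le m/(2m-1)$ is off. It does not come from a choice of Schur exponent; it comes from the crude bound $J_{m,a}(s)\le\B(a,1-a)(1-s)^{a-1}$ followed by $s^{(2m-1)(1-a)}\le s^{m-1}$. The paper keeps the same vector $h_j=(m)_j/j!$ on $m/(2m-1)<a<1$. Its generating function $(1-z)^{-m}$ is what makes the row sums collapse to ${}_2F_1(j+m,m;\gamma;1-y)$, which a generic power vector would not do.

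The proof then rests on two ideas absent from your plan:
\begin{itemize}
\item $\B(a,1-a)-\mathcal A_{m,a}(s)$ changes sign exactly once, from $-$ to $+$, as shown via $F(s)=s^{1-2ma}H(s)$ and the sign of $N'$. Multiplying by $s^j$ then shows that every row inequality follows from the single base moment $j=0$.
\item That base moment is converted by Fubini, beta recurrences and the reflection formula into an exact elementary inequality in $a$. For $m=2$ it becomes $2\cos(2\pi x)+9x-2\ge0$ on $0<x=1-a<1/3$; for $m=4,5$ it becomes positivity of explicit Bernstein polynomials.
\end{itemize}

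Your substitute (subdivide in $a$, certify finitely many $n$, control large $n$ asymptotically) does not close as described. It needs uniformity over infinitely many rows and a continuum of $a$. Moreover, as $a\to1$ the relative margin tends to zero (the reduced $m=2$ inequality is an equality at $x=0$). So no finite numerical cover reaches that endpoint without an exact analysis there.
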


The hard intervals in this theorem require one-sign-change and beta/trigonometric positivity estimates.  The longer elementary verifications appear in Appendix \ref{app:hard-small}.

We will also use the following local criterion for a fixed parameter pair.

\begin{theorem}
\label{thm:moving}
Fix $m\ge2$ and $a\in(0,1)$ such that $\alpha=2ma-2>-1$.  Suppose that every finite-section Boyd extremizer $c>0$ for this parameter pair admits a polynomial $P_c$, with $\deg P_c\le m-1$, such that the sequence
\[
   h_{c,j}=[z^j](1-z)^{-m}P_c(1-z)
\]
is positive and satisfies
\[
   \mathcal L_c h_c\le \B(a,1-a)D h_c.
\]
Then
\[
 \norm{\Hcal}_{A^{2m}_{2ma-2}\to A^{2m}_{2ma-2}}
 =\B(a,1-a).
\]
If the hypothesis holds for every $a$ in an interval, the same conclusion holds throughout the corresponding interval of weights.
\end{theorem}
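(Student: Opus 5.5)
The plan is to pair the known lower bound with an upper bound from Schur's test on finite sections. The lower bound $\norm{\Hcal}_{A^{2m}_\alpha\to A^{2m}_\alpha}\ge \B(a,1-a)$ holds for every admissible pair. It comes from the critical test functions $(1-z)^{-a+\varepsilon}$ described in the introduction (Karapetrovi\'c's lower bound). So only the reverse inequality needs proof. For it I would use the even-exponent reduction sketched in the introduction. For a polynomial $f$, put $G=f^m$. The quantity $\norm{\Hcal f}^{2m}_{A^{2m}_\alpha}$ is then controlled, via Boyd's representation of $\Hcal$ as an average of weighted composition operators, by a quadratic form $\langle \mathcal L x,x\rangle$ in a nonnegative coefficient sequence $x$ attached to $G$. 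The comparison norm $\norm{f}^{2m}_{A^{2m}_\alpha}=\norm{G}^2_{A^2_\alpha}$ is the diagonal form $\langle Dx,x\rangle$, where $D$ is the diagonal matrix of the $A^2_\alpha$ coefficient weights. The upper bound therefore follows once
\[
\langle \mathcal L x,x\rangle\le \B(a,1-a)\,\langle Dx,x\rangle
\]
holds for all finitely supported $x\ge0$.

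Next I would pass to finite sections. For each $N$ let $\mathcal L^{(N)}$ and $D^{(N)}$ be the $N\times N$ truncations. Since $\mathcal L^{(N)}$ has positive entries, the generalized Rayleigh quotient $\langle \mathcal L^{(N)}x,x\rangle/\langle D^{(N)}x,x\rangle$ attains its maximum $\lambda_N$. By Perron--Frobenius applied to $D^{-1/2}\mathcal L^{(N)}D^{-1/2}$, it is attained at a strictly positive vector $c$, and this $c$ is the finite-section Boyd extremizer. The $\lambda_N$ increase in $N$, and by density of polynomials and the monotone-convergence structure of the quadratic form, $\sup_N\lambda_N$ equals the square of the norm constant we seek. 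Hence it suffices to show $\lambda_N\le\B(a,1-a)$ for every $N$.

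For fixed $N$ with extremizer $c$, I would invoke the hypothesis to obtain $h_c$ with $h_c>0$ and $\mathcal L_c h_c\le \B(a,1-a)Dh_c$ entrywise. Here $\mathcal L_c$ is the matrix, possibly depending on $c$ through the linearization of $G=f^m$ at the extremizer, whose quadratic form agrees with $\mathcal L^{(N)}$ at $c$. Schur's test in the weighted form then applies. For nonnegative symmetric $K$ and positive $h$ with $Kh\le\lambda Dh$, Cauchy--Schwarz with weights $h_j/h_i$ gives
\[
\sum_{i,j}K_{ij}x_ix_j\le \sum_{i,j}K_{ij}\tfrac{h_j}{h_i}x_i^2\le\lambda\sum_i D_{ii}x_i^2 .
\]
At $x=c$ this yields $\lambda_N\le\B(a,1-a)$. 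The special form $h_{c,j}=[z^j](1-z)^{-m}P_c(1-z)$ with $\deg P_c\le m-1$ matters only for verifying the hypothesis, not for this deduction. Taking the supremum over $N$ and combining with the lower bound gives equality. The interval statement is immediate, because the argument is pointwise in $a$ and $\alpha=2ma-2$ is a bijection between the $a$-interval and the weight interval.

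The step I expect to be the main obstacle is the link between the Schur row inequality for $\mathcal L_c$ and the quotient for $\mathcal L^{(N)}$. I must check that $\mathcal L_c$ is the matrix obtained from the paper's earlier reduction, i.e. that replacing the nonlinear dependence of $\Hcal f$ on $G$ by a $c$-dependent positive matrix still dominates the true quantity at $c$. The natural route is to use that the extremizer maximizes the quotient, so that first-order conditions identify the two forms there. Making this identification exact at the extremizer is where care is needed. The truncation limit is routine by comparison.
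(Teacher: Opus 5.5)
\textbf{There is a genuine gap. It is in how you model the finite-section problem, and it sits exactly at the step you flagged.}

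After Boyd's averaging $\Hcal f=\int_0^1T_yf\,\mathrm{d}y$ and Minkowski's inequality, the quantity to control is not a quadratic form. For $G=f^m$ with coefficient vector $c$ one has
\[
 \norm{\Hcal f}_{A^{2m}_\alpha}\le \Phi_N(c)=\int_0^1 (c^*B_yc)^{1/(2m)}\,\mathrm{d}y,
 \qquad \norm{f}_{A^{2m}_\alpha}=(c^*Dc)^{1/(2m)}.
\]
Consequently:
\begin{enumerate}
\item No fixed matrix $\mathcal L^{(N)}$ has a generalized Rayleigh quotient equal to the relevant ratio.
\item The finite-section Boyd extremizer is not the Perron vector of such a matrix. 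It is a maximizer of the nonlinear functional $\Phi_N$ on $\{c^*Dc=1\}$.
\end{enumerate}
Your exponent bookkeeping shows the strain. If $\lambda_N$ were a quotient of $\norm{\Hcal f}^{2m}$ against $\norm{f}^{2m}$, then $\lambda_N\le\B(a,1-a)$ would give the constant $\B(a,1-a)^{1/(2m)}$. That lies below the lower bound of Proposition \ref{prop:lower}. You also call $\sup_N\lambda_N$ ``the square of the norm constant.''

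Suppose instead you take for $\mathcal L^{(N)}$ the fixed tangent matrix $L$ of \eqref{eq:Lm-def}. Then the identification you need is false in general:
\begin{enumerate}
\item the top generalized eigenvector of $(L,D)$ is not the Boyd extremizer;
\item $c^*\mathcal L_cc\ne c^*Lc$;
\item $L\le\B(a,1-a)D$ is a stronger statement (sufficient, not necessary), and the hypothesis on $\mathcal L_c$ says nothing about it.
\end{enumerate}
So, as written, nothing connects the row inequality for $\mathcal L_c$ to the extremal value. In your setup, ``first-order conditions identify the two forms'' has no precise content.

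\textbf{The repair is short once the objects are those of Section \ref{sec:boyd-perron}.} Put $R_c(y)=c^*B_yc$ and $\mathcal L_c=\int_0^1R_c(y)^{1/(2m)-1}B_y\,\mathrm{d}y$. Then, identically,
\[
c^*\mathcal L_cc=\int_0^1R_c(y)^{1/(2m)}\,\mathrm{d}y=\Phi_N(c).
\]
No first-order condition is needed for this identification. The argument then runs as follows.
\begin{enumerate}
\item Take a maximizer $c$ of $\Phi_N$ with $c^*Dc=1$. It can be chosen nonnegative because $B_y$ has nonnegative entries.
\item The Euler--Lagrange equation $\mathcal L_cc=\Lambda_NDc$, with $\mathcal L_c$ entrywise positive, shows $c>0$. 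So the hypothesis supplies $h_c$. This is the only place a first-order condition is used.
\item Your weighted Schur inequality evaluated at $x=c$ gives $\Lambda_N=c^*\mathcal L_cc\le\B(a,1-a)\,c^*Dc=\B(a,1-a)$.
\item $\Phi_N$ is homogeneous of degree $1/m$, so $\Phi_N(c')\le\B(a,1-a)(c'^*Dc')^{1/(2m)}$ for all $c'$, uniformly in $N$.
\item Fatou's lemma and Lemma \ref{lem:square-reduction} then give the upper bound.
\end{enumerate}
Here only $\norm{\Hcal}\le\sup_N\Lambda_N$ is available, not the equality you assert, because Minkowski's inequality is lossy. The inequality is all that is needed.

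\textbf{Comparison with the paper.} The paper uses the Euler--Lagrange equation to identify $\Lambda_N=\rho(D^{-1}\mathcal L_c)$ via Perron--Frobenius. It then bounds the Perron root by Collatz--Wielandt with $h_c$. Once $\Phi_N$ and $\mathcal L_c$ are defined correctly, your idea of evaluating the Schur form at $x=c$ is a marginally more direct route to the same bound.
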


Here $D$ is the Bergman coefficient diagonal and $\mathcal L_c$ is the nonlinear Boyd--Perron matrix defined in Section \ref{sec:boyd-perron}.  The counterexample shows that the hypothesis cannot hold at $a=a_0$ when $2m\ge1100000$.

\subsection*{Organization of the paper}
Section \ref{sec:method} contains the Bergman-space normalizations, the sharp lower bound, and the reduction of the even-exponent problem to finite positive matrices.  The same section recalls the versions of Schur's test and Boyd's variational principle used later, and explains how the resulting Perron equation differs from Boyd's original integral-operator setting.  Section \ref{sec:proofs} proves the positive norm results.  The real-exponent counterexample and its interval verification are given in Section \ref{sec:counterexample}.  Section \ref{sec:scope} records the remaining $p=12$ calculation and a local moment criterion.  Standard hypergeometric formulas and the longer sign computations for $m=2,3,4,5$, together with the partial calculation for $m=6$, are collected in Appendices \ref{app:identities}--\ref{app:m6}.
\section{Preliminaries and the square-function reduction}
\label{sec:method}

\subsection{Basic Bergman normalizations}

We begin with the monomial normalization.  With normalized area measure,
\[
   \int_\D |z|^{2n}\,\mathrm{d}A_\alpha(z)
   =2(\alpha+1)\int_0^1 r^{2n+1}(1-r^2)^\alpha\,\mathrm{d}r.
\]
After the change of variables $u=r^2$, this becomes
\[
   (\alpha+1)\int_0^1 u^n(1-u)^\alpha\,\mathrm{d}u
   =\Gamma(\alpha+2)\frac{\Gamma(n+1)}{\Gamma(n+\alpha+2)}.
\]
Thus, if $\gamma=\alpha+2$, then
\begin{equation}
\label{eq:monomial-norm}
   \norm{z^n}_{A^2_{\gamma-2}}^2=\frac{n!}{(\gamma)_n}.
\end{equation}
These monomial norms form the diagonal matrix $D$ used below.

We shall also use the following boundary singularity criterion.

\begin{lemma}
\label{lem:singularity}
Let $\alpha>-1$, $0<p<\infty$, and $\sigma>0$.  Then
\[
   (1-z)^{-\sigma}\in A^p_\alpha
   \quad\Longleftrightarrow\quad
   p\sigma<\alpha+2.
\]
\end{lemma}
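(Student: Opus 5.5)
We reduce to a one-dimensional integral in polar coordinates, localized near the singular point $z=1$. Since $(1-z)^{-\sigma}$ is continuous on $\overline{\D}\setminus\{1\}$, only a neighborhood of $1$ can cause divergence. So
\[
(1-z)^{-\sigma}\in A^p_\alpha \iff I:=\int_{\D\cap\{|1-z|<1/2\}} |1-z|^{-p\sigma}(1-|z|^2)^\alpha\,\mathrm dA(z)<\infty .
\]
Moreover $|(1-z)^{-\sigma}|=|1-z|^{-\sigma}$, because $\arg(1-z)\in(-\pi/2,\pi/2)$ and the principal branch is used.

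The computation centres at $1$. Write $z=1-w$ with $w=\rho e^{i\theta}$. The condition $z\in\D$ near $w=0$ means $|1-w|^2<1$, that is $\rho<2\cos\theta$, which forces $|\theta|<\pi/2$. Then
\[
1-|z|^2=2\rho\cos\theta-\rho^2=\rho(2\cos\theta-\rho),
\]
and the region $\rho<1/2$ inside $\D$ is $\{|\theta|<\pi/2,\ \rho<\min(1/2,2\cos\theta)\}$. In these coordinates
\[
I\asymp\int_{-\pi/2}^{\pi/2}\int_0^{\min(1/2,2\cos\theta)} \rho^{-p\sigma+\alpha+1}(2\cos\theta-\rho)^\alpha\,\mathrm d\rho\,\mathrm d\theta .
\]

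\textbf{Sufficiency.} Substitute $\rho=s\cos\theta$ when $\cos\theta<1/4$, and bound directly when $\cos\theta\ge1/4$. In the second case $(2\cos\theta-\rho)\asymp1$ on $\rho<1/2$. The inner integral is then $\asymp\int_0^{1/2}\rho^{\alpha+1-p\sigma}\,\mathrm d\rho$, which is finite exactly when $\alpha+2-p\sigma>0$. In the first case $\rho=s\cos\theta$ with $s\in(0,2)$ gives
\[
(\cos\theta)^{\alpha+2-p\sigma}\int_0^2 s^{\alpha+1-p\sigma}(2-s)^\alpha\,\mathrm ds .
\]
The $s$-integral converges iff $p\sigma<\alpha+2$, using $\alpha>-1$ at $s=2$. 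The $\theta$-integral of $(\cos\theta)^{\alpha+2-p\sigma}$ converges because the exponent is then positive, in particular $>-1$. Hence $p\sigma<\alpha+2$ implies $I<\infty$.

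\textbf{Necessity.} Restrict to the sector $|\theta|\le\pi/3$, where $\cos\theta\ge1/2$. There $2\cos\theta-\rho\ge1/2$ for $\rho<1/2$, so
\[
I\gtrsim\int_0^{1/2}\rho^{\alpha+1-p\sigma}\,\mathrm d\rho ,
\]
which diverges when $p\sigma\ge\alpha+2$. Alternatively, one can compare with the Stolz angle $\{|1-z|\le 2(1-|z|)\}$, on which $1-|z|^2\asymp|1-z|$ and the area of $\{r<|1-z|<2r\}$ is $\asymp r^2$; a dyadic sum then gives $\sum_k 2^{-k(\alpha+2-p\sigma)}$.

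The main obstacle is only bookkeeping at the edges of the region, where $\cos\theta\to0$ and the weight degenerates. The substitution $\rho=s\cos\theta$ handles this cleanly.
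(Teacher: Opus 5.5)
Your argument is essentially the paper's. Both localize at $1$, pass to polar coordinates centred at the singular point, and read off the radial power $\rho^{\alpha+1-p\sigma}$. The paper first flattens the picture: with $x=1-r$ it uses $1-r^2\asymp x$ and $|1-re^{i\theta}|^2\asymp x^2+\theta^2$, so the weight $x^\alpha$ separates in polar coordinates of a half-plane. You instead keep the exact identity $1-|z|^2=\rho(2\cos\theta-\rho)$ and handle the curved boundary with the rescaling $\rho=s\cos\theta$. Your necessity step on the sector $|\theta|\le\pi/3$ needs no comparison constants at all.

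Two slips in the sufficiency step need repair; neither is a real gap.

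First, after $\rho=s\cos\theta$ the factor $(2\cos\theta-\rho)^\alpha=(\cos\theta)^\alpha(2-s)^\alpha$ also contributes. The angular weight is therefore $(\cos\theta)^{2\alpha+2-p\sigma}$, not $(\cos\theta)^{\alpha+2-p\sigma}$. This exponent can be negative when $\alpha<0$, so ``the exponent is positive'' is not the right reason for convergence. What is true is that $2\alpha+2-p\sigma>\alpha>-1$ when $p\sigma<\alpha+2$; this is a second place where $\alpha>-1$ enters.

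Second, for $\cos\theta\ge1/4$ and $\rho<1/2$ it is false that $2\cos\theta-\rho\asymp1$. At $\cos\theta=1/4$ this quantity tends to $0$ as $\rho\to1/2$, and for $\alpha<0$ the weight is unbounded there. You can split at $\cos\theta=1/2$ instead, where $2\cos\theta-\rho\in[1/2,2]$. Alternatively, use the rescaling for every $\theta$: since $s<\min\{2,1/(2\cos\theta)\}\le2$, you get directly
\[
I\le\frac1\pi\int_{-\pi/2}^{\pi/2}(\cos\theta)^{2\alpha+2-p\sigma}\,\mathrm d\theta\int_0^2 s^{\alpha+1-p\sigma}(2-s)^\alpha\,\mathrm ds<\infty .
\]
With these corrections the proof is complete.
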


\begin{proof}
Only a small neighborhood of $1\in\partial\D$ matters.  Write $z=re^{i\theta}$ and $x=1-r$.  For $r$ close to $1$ and $|\theta|$ small,
\[
   1-r^2\asymp x,
   \qquad
   |1-re^{i\theta}|^2\asymp x^2+\theta^2.
\]
Thus, the local integral is comparable to
\[
  \int_0^\delta\int_{-\delta}^{\delta}
  x^\alpha(x^2+\theta^2)^{-p\sigma/2}\,\mathrm{d}\theta\,\mathrm{d}x.
\]
Passing to polar coordinates in the half-plane $x>0$ gives a radial power
\[
   \rho^{\alpha-p\sigma+1}.
\]
The integral converges at $0$ exactly when
\[
   \alpha-p\sigma+1>-1,
\]
which is equivalent to $p\sigma<\alpha+2$.
\end{proof}

\begin{remark}
The same local computation also shows the mass concentration used in the lower-bound argument: if $\sigma=a-\varepsilon$ with $a=(\alpha+2)/p$, then the norm of $(1-z)^{-\sigma}$ diverges like a positive multiple of $\varepsilon^{-1/p}$ as $\varepsilon\downarrow0$, and the mass concentrates in every fixed neighborhood of the point $1$.
\end{remark}

\subsection{The sharp lower bound}

\begin{proposition}
\label{prop:lower}
Let $\alpha>-1$, $p>\alpha+2$, and $a=(\alpha+2)/p$.  If $\Hcal$ is bounded on $A^p_\alpha$, then
\[
   \norm{\Hcal}_{A^p_\alpha\to A^p_\alpha}\ge \B(a,1-a).
\]
\end{proposition}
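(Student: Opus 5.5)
We use the test functions $f_\varepsilon(z)=(1-z)^{-(a-\varepsilon)}$ for small $\varepsilon>0$. By Lemma \ref{lem:singularity} each $f_\varepsilon$ lies in $A^p_\alpha$, and by the Remark following it $\norm{f_\varepsilon}_{A^p_\alpha}\to\infty$ as $\varepsilon\downarrow0$, with the mass concentrating at $1$. The plan is to show that, modulo an error negligible relative to $\norm{f_\varepsilon}_{A^p_\alpha}$, the operator acts on $f_\varepsilon$ as multiplication by a constant tending to $\B(a,1-a)$.

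\textbf{Step 1: the principal part of $\Hcal f_\varepsilon$.} Put $\sigma=a-\varepsilon$. In $\Hcal f_\varepsilon(z)=\int_0^1(1-t)^{-\sigma}(1-tz)^{-1}\,\mathrm dt$, substitute $t\mapsto$ the weighted-composition form used by Bo\v{z}in--Karapetrovi\'c and Lindstr\"om--Miihkinen--Wikman:
\[
\Hcal f(z)=\int_0^1 \frac{1}{1-(1-z)s}\,f\!\left(\frac{s}{s-1}\cdots\right)\cdots
\]
A more direct route suits the power function. The kernel computation with $t$ replaced via $1-t=(1-z)u/(1-tz)$-type substitutions should give
\[
\Hcal f_\varepsilon(z)=\B(\sigma,1-\sigma)\,(1-z)^{-\sigma}\,z^{\,\cdot}+R_\varepsilon(z).
\]
Concretely, the exact Euler integral gives $\Hcal f_\varepsilon(z)=\frac{1}{1-\sigma}\,{}_2F_1(1,1;2-\sigma;z)$. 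Then the connection formula for ${}_2F_1$ at $z=1$ (since $c-a-b=-\sigma<0$) gives
\[
{}_2F_1(1,1;2-\sigma;z)=\frac{\Gamma(2-\sigma)\Gamma(-\sigma)}{\Gamma(1-\sigma)^2}\,{}_2F_1(1,1;1+\sigma;1-z)+\Gamma(2-\sigma)\Gamma(\sigma)(1-z)^{-\sigma}z^{\sigma-1}.
\]
The coefficient of $(1-z)^{-\sigma}$ is therefore $\Gamma(\sigma)\Gamma(1-\sigma)=\B(\sigma,1-\sigma)$, multiplied by $z^{\sigma-1}$. Because $z^{\sigma-1}$ is not single valued, we combine it with the other term: the sum is analytic, and near $z=1$ one has $z^{\sigma-1}=1+O(|1-z|)$. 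So we write
\[
\Hcal f_\varepsilon=\B(\sigma,1-\sigma)f_\varepsilon+R_\varepsilon,
\]
where $R_\varepsilon$ is $O(1+|1-z|^{1-\sigma}+|\log|1-z||)$ near $1$, uniformly in small $\varepsilon$, and bounded away from $1$. The points $z$ near $0$ need no special care, since there everything is analytic and uniformly bounded.

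\textbf{Step 2: the remainder is negligible.} $R_\varepsilon$ has at most a mild singularity of order $|1-z|^{-(\sigma-1)}\cdot|1-z|^{0}$ (i.e.\ a bounded term plus $\log$ and a term of order $|1-z|^{1-\sigma}$). Hence $\norm{R_\varepsilon}_{A^p_\alpha}$ stays bounded as $\varepsilon\to0$, again by Lemma \ref{lem:singularity} or by direct estimation. By Minkowski's inequality (or the quasi-triangle inequality if needed; here $p>\alpha+2>1$),
\[
\frac{\norm{\Hcal f_\varepsilon}}{\norm{f_\varepsilon}}\ge \B(\sigma,1-\sigma)-\frac{\norm{R_\varepsilon}}{\norm{f_\varepsilon}}.
\]
Letting $\varepsilon\downarrow0$ and using the divergence of $\norm{f_\varepsilon}$ together with continuity of $\B(\sigma,1-\sigma)$ in $\sigma$ gives the claim.

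\textbf{Main obstacle.} The delicate point is the uniform-in-$\varepsilon$ control of $R_\varepsilon$. The connection coefficients carry $\Gamma(-\sigma)$ and analogous factors, which are harmless since $\sigma\in(0,1)$ stays away from the integers. One must also verify that the regular part ${}_2F_1(1,1;1+\sigma;1-z)$ is bounded on the whole disk, because $|1-z|$ can reach $2$. To handle this, we estimate the contribution away from $1$ crudely by $\sup_{|1-z|\ge\delta}|\Hcal f_\varepsilon|$, which is bounded uniformly because $f_\varepsilon$ is uniformly integrable on $[0,1]$ for $\sigma\le a<1$. The connection formula is then needed only in a fixed neighborhood of $1$. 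As a fallback, if that proves awkward, we split the integral for $\Hcal f_\varepsilon$ at $t=1-\delta$ and estimate it directly by the substitution $1-t=|1-z|u$.
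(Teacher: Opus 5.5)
Your argument is correct, but it takes a different route from the paper's.

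\textbf{The paper's route.} It uses the weighted-composition form of $\Hcal$ to get an exact \emph{multiplicative} identity $\Hcal f_\varepsilon=f_\varepsilon G_\varepsilon$, with $G_\varepsilon(z)=\int_0^1(1-t)^{-\sigma}(1-(1-t)z)^{\sigma-1}\,\mathrm dt$. It notes that $G_\varepsilon(z)\to\B(\sigma,1-\sigma)$ as $z\to1$. It then invokes concentration of $|f_\varepsilon|^p\,\mathrm dA_\alpha$ at $1$, so only the values of $G_\varepsilon$ near $1$ matter.

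\textbf{Your route.} You make an \emph{additive} split $\Hcal f_\varepsilon=\B(\sigma,1-\sigma)f_\varepsilon+R_\varepsilon$, using the Euler integral and the Gauss connection formula. You show $R_\varepsilon$ is bounded uniformly in $\varepsilon$. You finish with the reverse triangle inequality (legitimate since $p>\alpha+2>1$) and $\norm{f_\varepsilon}_{A^p_\alpha}\to\infty$, which follows from Fatou since $|1-z|^{-p\sigma}\to|1-z|^{-pa}\notin L^1(\mathrm dA_\alpha)$.

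\textbf{How they relate and what each buys.} Multiplying your formula by $(1-z)^{\sigma}$ gives $G_\varepsilon(z)=\B(\sigma,1-\sigma)z^{\sigma-1}-\sigma^{-1}(1-z)^{\sigma}\,{}_2F_1(1,1;1+\sigma;1-z)$. So your computation shows $G_\varepsilon(z)-\B(\sigma,1-\sigma)=O(|1-z|^{\sigma})$ uniformly for small $\varepsilon$. That is exactly the uniformity the paper's one-line limit leaves implicit. Your route is fully quantitative and needs only divergence of the norm, not concentration of mass. The paper's route avoids special-function identities and reuses the operators $T_y$ from the square-function reduction. It also needs no triangle inequality, so the same argument works for every $p>0$.

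\textbf{Cleanups.} First, discard the abandoned weighted-composition fragment at the start of Step 1. Second, the regular connection coefficient (after dividing by $1-\sigma$) is $\Gamma(-\sigma)/\Gamma(1-\sigma)=-1/\sigma$. Also ${}_2F_1(1,1;1+\sigma;w)=\sum_n\frac{n!}{(1+\sigma)_n}w^n$ has coefficients at most $1$. Hence $R_\varepsilon$ is actually bounded near $1$, and the logarithmic term is unnecessary. Third, on the region $|1-z|\ge\delta$ you need more than integrability of $(1-t)^{-\sigma}$. You also need the kernel bound $|1-tz|\ge\max(1-t,\,t|1-z|)\ge\delta/(1+\delta)$ for $t\in[0,1]$, $z\in\D$, which holds because $1-tz=(1-t)+t(1-z)$ with $\operatorname{Re}(1-z)>0$. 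With that bound, $\sup_{\D}|R_\varepsilon|$ is bounded uniformly in $\varepsilon$. So is $\norm{R_\varepsilon}_{A^p_\alpha}$, since $\mathrm dA_\alpha$ is a probability measure; this replaces your appeal to the singularity lemma, which gives no uniformity in $\varepsilon$.
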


\begin{proof}
For $0<\varepsilon<a$, put
\[
  f_\varepsilon(z)=(1-z)^{-a+\varepsilon}.
\]
By Lemma \ref{lem:singularity}, $f_\varepsilon\in A^p_\alpha$, since
\[
  p(a-\varepsilon)=\alpha+2-p\varepsilon<\alpha+2.
\]
Using the weighted-composition identity
\[
  \Hcal f(z)=\int_0^1\frac{1}{1-(1-t)z}
  f\!\left(\frac{t}{1-(1-t)z}\right)\,\mathrm{d}t,
\]
one obtains
\[
  \Hcal f_\varepsilon(z)=f_\varepsilon(z)G_\varepsilon(z),
\]
where
\[
  G_\varepsilon(z)
  =\int_0^1(1-t)^{-a+\varepsilon}
    (1-(1-t)z)^{a-\varepsilon-1}\,\mathrm{d}t.
\]
As $z\to1$ inside $\D$,
\[
   G_\varepsilon(z)\to \B(a-\varepsilon,1-a+\varepsilon).
\]
Moreover, as $\varepsilon\downarrow0$, the measures
\[
   |f_\varepsilon(z)|^p\,\mathrm{d}A_\alpha(z)
\]
concentrate at the boundary point $1$.  Hence, \[
  \liminf_{\varepsilon\downarrow0}\frac{\norm{\Hcal f_\varepsilon}_{A^p_\alpha}}{\norm{f_\varepsilon}_{A^p_\alpha}}
  \ge \lim_{\varepsilon\downarrow0}\B(a-\varepsilon,1-a+\varepsilon)
  =\B(a,1-a).
\]
This proves the lower bound.
\end{proof}

\subsection{The even-exponent square-function reduction}

Let $p=2m$ and set
\[
   \gamma=\alpha+2=2ma,
   \qquad a\in(0,1).
\]
For $G\in A^2_{\gamma-2}$, define
\[
  S_y^{(m)}G(z)
  =\frac{1}{(1-(1-y)z)^m}
  G\!\left(\frac{y}{1-(1-y)z}\right),
  \qquad 0<y<1.
\]

\begin{lemma}
\label{lem:square-reduction}
Suppose that for every analytic polynomial $G$,
\begin{equation}
\label{eq:SF}
  \int_0^1\norm{S_y^{(m)}G}_{A^2_{\gamma-2}}^{1/m}\,\mathrm{d}y
  \le \B(a,1-a)\norm{G}_{A^2_{\gamma-2}}^{1/m}.
\end{equation}
Then
\[
  \norm{\Hcal}_{A^{2m}_{\gamma-2}\to A^{2m}_{\gamma-2}}
  \le \B(a,1-a).
\]
Together with Proposition \ref{prop:lower}, this yields the sharp norm formula.
\end{lemma}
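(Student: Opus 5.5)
The plan is to use the weighted-composition representation of $\Hcal$ from the proof of Proposition \ref{prop:lower} and to apply Minkowski's integral inequality in $A^{2m}_{\gamma-2}$. Let $f$ be a polynomial, which suffices by density once the bound is uniform. Write
\[
  \Hcal f(z)=\int_0^1 T_yf(z)\,\mathrm dy,\qquad
  T_yf(z)=\frac{1}{1-(1-y)z}\,f\!\left(\frac{y}{1-(1-y)z}\right).
\]
The map $\phi_y(z)=y/(1-(1-y)z)$ sends $\D$ into $\D$. Indeed, it carries the unit disk onto a disk inside $\D$ that touches the boundary only at $1$, since $\phi_y(1)=1$. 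Hence each $T_yf$ is analytic on $\D$. Minkowski's inequality then gives
\[
 \norm{\Hcal f}_{A^{2m}_{\gamma-2}}\le\int_0^1\norm{T_yf}_{A^{2m}_{\gamma-2}}\,\mathrm dy .
\]

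The key observation is that $(T_yf)^m=S^{(m)}_y(f^m)$, because the $m$-th power of the multiplier $(1-(1-y)z)^{-1}$ is exactly the factor in $S^{(m)}_y$. Put $G=f^m$, which is again a polynomial. Then
\[
\norm{T_yf}_{A^{2m}_{\gamma-2}}=\Bigl(\int_\D|S^{(m)}_yG|^2\,\mathrm dA_{\gamma-2}\Bigr)^{1/(2m)}=\norm{S^{(m)}_yG}_{A^2_{\gamma-2}}^{1/m},
\]
and likewise $\norm{f}_{A^{2m}_{\gamma-2}}=\norm{G}_{A^2_{\gamma-2}}^{1/m}$. Substituting these into the Minkowski bound and applying hypothesis \eqref{eq:SF} yields
\[
\norm{\Hcal f}_{A^{2m}_{\gamma-2}}\le \B(a,1-a)\norm{f}_{A^{2m}_{\gamma-2}}
\]
for every polynomial $f$.

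It remains to extend this bound from polynomials to all of $A^{2m}_{\gamma-2}$. Polynomials are dense in $A^{p}_\alpha$ for $p<\infty$. The bounded-ness of $\Hcal$ in the admissible range $p>\alpha+2$, equivalently $a<1$, is known from the literature. If one prefers not to cite it, the estimate just proved itself gives a uniform bound on polynomials, so $\Hcal$ extends continuously. For that extension to agree with the integral formula, one checks that $\Hcal f_n\to\Hcal f$ pointwise when $f_n\to f$ in $A^{2m}_{\gamma-2}$. This follows from the coefficient formula, since the Taylor coefficients of $f_n$ converge to those of $f$ and are controlled by the Bergman norm.

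The extension therefore satisfies $\norm{\Hcal}\le\B(a,1-a)$, and Proposition \ref{prop:lower} supplies the reverse inequality. The point requiring the most care is the justification of Minkowski's integral inequality for the vector-valued integral. This needs joint measurability of $(y,z)\mapsto T_yf(z)$, which is clear because it is continuous on $(0,1)\times\D$. It also needs the integral $\int_0^1\norm{T_yf}\,\mathrm dy$ to be finite, and that is exactly what \eqref{eq:SF} guarantees for the polynomial $G$.
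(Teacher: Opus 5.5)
Your proposal is correct and is essentially the paper's proof: the same splitting $\Hcal f=\int_0^1T_yf\,\mathrm dy$, the identity $(T_yf)^m=S_y^{(m)}(f^m)$ that turns the $A^{2m}_{\gamma-2}$ norms into $A^2_{\gamma-2}$ norms, Minkowski's inequality, the hypothesis applied to $G=f^m$, and then density. One small caution concerns your optional extension step. The pointwise convergence $\Hcal f_n\to\Hcal f$ is best obtained from the integral formula together with the growth bound $|f(t)|\le\norm{f}_{A^{2m}_{\gamma-2}}(1-t^2)^{-a}$, $a<1$, from Lemma \ref{lem:explicit-evaluation}. The coefficient bound $|a_k|\lesssim k^{a}\norm{f}$ alone does not control the infinite sums $\sum_k a_k/(n+k+1)$.
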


\begin{proof}
Write
\[
  \Hcal f(z)=\int_0^1T_yf(z)\,\mathrm{d}y,
\]
where
\[
  T_yf(z)= \frac{1}{1-(1-y)z}
  f\!\left(\frac{y}{1-(1-y)z}\right).
\]
If $G=f^m$, then
\[
  S_y^{(m)}G=(T_yf)^m.
\]
Therefore, \[
  \norm{T_yf}_{A^{2m}_{\gamma-2}}
  =\norm{S_y^{(m)}G}_{A^2_{\gamma-2}}^{1/m},
  \qquad
  \norm{G}_{A^2_{\gamma-2}}^{1/m}
  =\norm{f}_{A^{2m}_{\gamma-2}}.
\]
Minkowski's inequality gives
\[
  \norm{\Hcal f}_{A^{2m}_{\gamma-2}}
  \le \int_0^1\norm{T_yf}_{A^{2m}_{\gamma-2}}\,\mathrm{d}y
  =\int_0^1\norm{S_y^{(m)}(f^m)}_{A^2_{\gamma-2}}^{1/m}\,\mathrm{d}y.
\]
Applying \eqref{eq:SF} to $G=f^m$ proves the upper bound for polynomials, and density completes the proof.
\end{proof}

This is the only point where the assumption $p=2m$ is used: $G=f^m$ turns a $2m$-norm into a square norm.  The polynomial reduction causes no loss.  Indeed, $f_r(z)=f(rz)$ tends to $f$ in $A^p_\alpha$ as $r\uparrow1$, and $f_r$ may be approximated uniformly on $\overline\D$ by its Taylor polynomials.

\subsection{Finite matrices}

Let
\[
  G(z)=\sum_{j=0}^N c_jz^j.
\]
By \eqref{eq:monomial-norm},
\[
  \norm{z^j}_{A^2_{\gamma-2}}^2=\frac{j!}{(\gamma)_j},
\]
and we set
\[
  D_j=\frac{j!}{(\gamma)_j}.
\]
The matrix $D=\operatorname{diag}(D_0,D_1,\dots)$ is the coefficient form of the $A^2_{\gamma-2}$ norm.  In particular,
\[
   \norm{G}_{A^2_{\gamma-2}}^2=c^*Dc.
\]
The coefficient expansion
\[
S_y^{(m)}G(z)=\sum_{n=0}^{\infty}(1-y)^n
\left[\sum_{j=0}^Nc_jy^j\binom{m+j+n-1}{n}\right]z^n
\]
gives
\[
  \norm{S_y^{(m)}G}_{A^2_{\gamma-2}}^2=c^*B_yc,
\]
where
\begin{equation}
\label{eq:By}
  (B_y)_{jk}=y^{j+k}\sum_{n=0}^{\infty}
  \frac{(m+j)_n(m+k)_n}{(\gamma)_n n!}(1-y)^{2n}.
\end{equation}

\begin{proof}[Derivation of \eqref{eq:By}]
The coefficient calculation is short.  Since
\[
  (1-(1-y)z)^{-m-j}
  =\sum_{n=0}^{\infty}\frac{(m+j)_n}{n!}(1-y)^nz^n,
\]
we have
\[
 S_y^{(m)}(z^j)
 =y^j(1-(1-y)z)^{-m-j}
 =\sum_{n=0}^{\infty}y^j\frac{(m+j)_n}{n!}(1-y)^nz^n.
\]
Thus, for $G(z)=\sum_jc_jz^j$ the coefficient of $z^n$ in $S_y^{(m)}G$ is
\[
   (1-y)^n\sum_j c_jy^j\frac{(m+j)_n}{n!}.
\]
Multiplying the coefficient by the monomial norm $n!/(\gamma)_n$ and summing in $n$ gives
\[
 \sum_{n=0}^{\infty}\frac{n!}{(\gamma)_n}(1-y)^{2n}
 \left|\sum_jc_jy^j\frac{(m+j)_n}{n!}\right|^2.
\]
Expanding the square yields exactly \eqref{eq:By}.
\end{proof}

Thus, \eqref{eq:SF} is equivalent to
\begin{equation}
\label{eq:finite-target}
  \int_0^1(c^*B_yc)^{1/(2m)}\,\mathrm{d}y
  \le \B(a,1-a)(c^*Dc)^{1/(2m)}.
\end{equation}

For a polynomial, the $j$-sum is finite and the $n$-sum is absolutely convergent.  Uniform estimates for these finite sections pass to every $G\in A^2_{\gamma-2}$ by Taylor truncation and Fatou's lemma.

\subsection{Tangent reduction and weighted Schur tests}

Let
\[
  \psi(y)=y^{a-1}(1-y)^{-a}.
\]
Then
\[
  \int_0^1\psi(y)\,\mathrm{d}y=\B(a,1-a).
\]
For $q=1/(2m)$ and $X,\Lambda>0$,
\[
  X^q\le q\Lambda^{q-1}X+(1-q)\Lambda^q.
\]
Choosing
\[
  X=c^*B_yc,
  \qquad
  \Lambda=\psi(y)^{2m}c^*Dc,
\]
shows that \eqref{eq:finite-target} follows from
\begin{equation}
\label{eq:linear-target}
  \int_0^1 \psi(y)^{1-2m} c^*B_yc\,\mathrm{d}y
  \le \B(a,1-a)c^*Dc.
\end{equation}
Define
\begin{equation}
\label{eq:Lm-def}
  L_{jk}=\int_0^1 y^{(2m-1)(1-a)}(1-y)^{(2m-1)a}(B_y)_{jk}\,\mathrm{d}y.
\end{equation}
Then \eqref{eq:linear-target} is $L\le \B(a,1-a)D$ as quadratic forms.

To see that the constant has not changed, use \eqref{eq:linear-target} in the tangent inequality:
\[
\begin{aligned}
\int_0^1(c^*B_yc)^q\,\mathrm{d}y
&\le q(c^*Dc)^{q-1}\int_0^1\psi(y)^{1-2m}c^*B_yc\,\mathrm{d}y\\
&\quad +(1-q)(c^*Dc)^q\int_0^1\psi(y)\,\mathrm{d}y \\
&\le \B(a,1-a)(c^*Dc)^q.
\end{aligned}
\]

\subsection{Schur's test and its use in this paper}

Schur introduced positive auxiliary sequences in his study of bounded infinite bilinear forms \cite{Schur1911}; the classical Hilbert inequality, with its sharp constant $\pi$, is one of the main examples behind the method.  A modern account is given by Garcia, Mashreghi, and Ross.  They first treat the one-weight case in \cite[Section 3.2, pp.~74--76]{GarciaMashreghiRoss2023}, and then prove the two-weight form in \cite[Theorem 3.3.1, p.~77]{GarciaMashreghiRoss2023}.  We state the latter because the $p=2$ argument below is not symmetric in the standard coefficient basis.

\begin{schurtest}
Let $A=(a_{ij})_{i,j\ge0}$ be an infinite matrix.  Suppose that there are positive constants $\alpha,\beta$ and positive sequences $(p_i)_{i\ge0}$ and $(q_i)_{i\ge0}$ such that
\begin{equation}
\label{eq:classical-schur}
   \sum_{i=0}^{\infty}|a_{ij}|p_i\le \alpha q_j,\qquad j\ge0,
   \qquad
   \sum_{j=0}^{\infty}|a_{ij}|q_j\le \beta p_i,\qquad i\ge0.
\end{equation}
Then $A$ defines a bounded operator on $\ell^2$ and
\[
    \norm{A}_{\ell^2\to\ell^2}\le \sqrt{\alpha\beta}.
\]
\end{schurtest}

There are three slightly different uses of this test in the paper.
\begin{enumerate}[label=(\roman*)]
\item For $p=2$, conjugating by the Bergman monomial norms produces the matrix $K$ in \eqref{eq:p2-K}.  It is not symmetric in the standard $\ell^2$ basis.  Both inequalities in \eqref{eq:classical-schur} are therefore needed, with two different gamma-function weights; see \eqref{eq:p2-schur1} and \eqref{eq:p2-schur2}.
\item For $p=2m$, the tangent estimate produces a symmetric matrix $L$ together with the diagonal Bergman metric $D$.  Conjugation by $D^{1/2}$ turns the problem into an ordinary symmetric Schur estimate.  Only one positive sequence is then required.
\item Schur's test supplies the upper bound only.  Sharpness comes from the boundary family in Proposition \ref{prop:lower}.  Thus no equality case of the Schur test is needed.  What matters is to find a positive supersolution whose row sums have the beta constant.
\end{enumerate}

The form needed for the even-exponent argument is recorded next.

\begin{lemma}
\label{lem:weighted-schur}
Let $L=(L_{jk})_{j,k\ge0}$ be a symmetric matrix with nonnegative entries, and let $D=\operatorname{diag}(D_j)$ with $D_j>0$.  If there exists $h_j>0$ and $C>0$ such that
\[
  \sum_{k\ge0}L_{jk}h_k\le C D_jh_j,
  \qquad j=0,1,2,\dots,
\]
then
\[
  c^*Lc\le C c^*Dc
\]
for all finitely supported vectors $c$.
\end{lemma}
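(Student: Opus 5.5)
The plan is the standard one-weight Schur argument, run with the weights $\sqrt{h_j}$ and $\sqrt{D_j}$ separated through the Cauchy--Schwarz inequality (equivalently, the weighted AM--GM inequality). Fix a finitely supported vector $c$. Since $L_{jk}\ge0$, the triangle inequality gives
\[
  |c^*Lc|\le \sum_{j,k}L_{jk}|c_j||c_k|,
\]
so it suffices to treat the case where every $c_j\ge0$.

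For each pair $(j,k)$ with $L_{jk}>0$, write
\[
 |c_j||c_k|=\Bigl(|c_j|\sqrt{h_k/h_j}\Bigr)\Bigl(|c_k|\sqrt{h_j/h_k}\Bigr)
 \le \tfrac12 |c_j|^2\frac{h_k}{h_j}+\tfrac12 |c_k|^2\frac{h_j}{h_k},
\]
which uses $h_j>0$. Multiply by $L_{jk}$ and sum over all $j,k$; every sum here is finite because $c$ has finite support. The symmetry $L_{jk}=L_{kj}$ makes the two halves equal after the relabeling $j\leftrightarrow k$, so
\[
 \sum_{j,k}L_{jk}|c_j||c_k|\le \sum_j \frac{|c_j|^2}{h_j}\sum_k L_{jk}h_k .
\]

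Now apply the hypothesis row by row: $\sum_k L_{jk}h_k\le C D_jh_j$. The right-hand side is then at most
\[
\sum_j C D_j|c_j|^2=C\,c^*Dc,
\]
which is the claim.

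The only point needing care is the inner sum $\sum_k L_{jk}h_k$, which runs over all $k$ and not just the support of $c$. It has nonnegative terms and is bounded by the hypothesis, so it converges and the rearrangement is legitimate. I do not expect any genuine obstacle. Alternatively, one can conjugate by $D^{1/2}$, setting $\tilde L=D^{-1/2}LD^{-1/2}$ and $\tilde h_j=D_j^{1/2}h_j$; the hypothesis becomes $\tilde L\tilde h\le C\tilde h$, and the one-weight Schur test on $\ell^2$ gives $\|\tilde L\|\le C$, which is the same statement.
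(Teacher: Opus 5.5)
Your proof is correct and essentially the paper's: the paper conjugates by $D^{1/2}$ and cites the one-weight Schur test (exactly your closing alternative), and your AM--GM computation with the ratios $h_k/h_j$ is the standard proof of that test, done directly with $D$ entering only at the last step. One small tidy-up: if you restrict the AM--GM step to pairs $j,k$ in the support of $c$ and then use $\sum_{k\in S}L_{jk}h_k\le\sum_k L_{jk}h_k$, every sum really is finite. That removes the need for your convergence remark.
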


This is the one-weight Schur test of \cite[Section 3.2, pp.~74--76]{GarciaMashreghiRoss2023}, applied to $D^{-1/2}LD^{-1/2}$ with the weight $D^{1/2}h$.

The lemma is used uniformly on finite sections.  This point is important: the Schur vector may depend on $m$ and $a$, but the constant and the row inequalities must not depend on the truncation index.  Taylor truncation and Fatou's lemma then give the infinite-dimensional estimate.

\subsection{The Boyd--Perron interpretation}
\label{sec:boyd-perron}

Boyd's paper \cite{Boyd1969} studies best constants for functionals
\[
  J(f)=\int |Tf|^\rho |f|^q\,\mathrm d\mu
\]
under an $L^r$ normalization, where $T$ is a positive compact integral operator and $s=\rho r/(r-q)$.  The argument has three distinct parts.  Lemma~1 of \cite[pp.~369--371]{Boyd1969} proves that the supremum is attained.  Lemma~2 of \cite[pp.~371--375]{Boyd1969} proves positivity of an extremizer under a strict positivity assumption on the kernel, derives the Euler equation, and identifies its largest normalized eigenvalue.  Finally, Theorem~1 of \cite[pp.~368--376]{Boyd1969} specializes the abstract result to a Volterra operator and converts the Euler equation into a nonlinear boundary-value problem.  Only the first two ideas are relevant here.

For comparison with the finite calculation below, we record the abstract variational statement.  This is the part of Boyd's Lemmas~1 and~2 that we use as a model.

\begin{theoremA}
Let $\rho>0$, $r>1$, $0\le q<r$, and put $s=\rho r/(r-q)$.  Let $T$ be a positive compact integral operator from $L^r_m$ to $L^s_m$, and define
\[
   J(f)=\int |Tf|^\rho |f|^q\,\mathrm{d}\mu,
   \qquad
   K^*=\sup\{J(f):\norm{f}_{L^r_m}\le1\}.
\]
Then there exists $f_0\ge0$ with $\norm{f_0}_{L^r_m}=1$ and $J(f_0)=K^*$.  Under Boyd's strict positivity hypothesis, an extremizer is positive almost everywhere and satisfies
\begin{equation}
\label{eq:boyd-eigen}
 r\lambda f^{r-1}
 -q(Tf)^\rho f^{q-1}
 -\rho T^*\big((Tf)^{\rho-1}f^q\big)=0,
 \qquad
 \lambda=\frac{K^*(\rho+q)}{r}.
\end{equation}
This $\lambda$ is the largest value for which a normalized positive solution of \eqref{eq:boyd-eigen} exists.
\end{theoremA}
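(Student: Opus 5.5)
The plan is to treat Theorem A as a constrained maximization problem in four steps: existence of a maximizer by compactness, a Lagrange multiplier argument for the Euler equation \eqref{eq:boyd-eigen}, a first-variation argument for positivity, and an Euler-homogeneity identity for the eigenvalue characterization. Since $T$ is positive, $|Tf|\le T|f|$ pointwise, so $J(f)\le J(|f|)$, and we may restrict to $f\ge0$. The exponents match Hölder's inequality exactly: $\rho/s+q/r=(r-q)/r+q/r=1$. Hence
\[
J(f)\le \norm{Tf}_{L^s}^\rho\norm{f}_{L^r}^q\le \norm{T}^\rho\norm{f}_{L^r_m}^{\rho+q},
\]
so $K^*<\infty$, and $J(\lambda f)=\lambda^{\rho+q}J(f)$ for $\lambda>0$.

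\emph{Existence of a maximizer.} Take a maximizing sequence $f_n\ge0$ with $\norm{f_n}_r\le1$. Since $r>1$, $L^r$ is reflexive and we may assume $f_n\rightharpoonup f$ weakly. Compactness of $T$ gives $Tf_n\to Tf$ in $L^s$. By Hölder, $(Tf_n)^\rho$ may then be replaced by $(Tf)^\rho$ in $J(f_n)$ at cost $o(1)$. What remains is the passage to the limit in $\int (Tf)^\rho f_n^q\,d\mu$ under weak convergence only.
\begin{itemize}
\item When $q\le1$, the map $g\mapsto\int w\,g^q$ with $w=(Tf)^\rho\ge0$ is concave and strongly continuous on $L^r_+$. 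It is therefore weakly upper semicontinuous, which gives $J(f)\ge K^*$ with $\norm{f}_r\le1$.
\item When $1<q<r$, I would first pass to Cesàro means (Banach--Saks/Mazur) to get a.e.\ convergence. I would then use a Brezis--Lieb splitting $f_n=f+r_n$, which gives $\norm{f_n}_r^r\approx\norm{f}_r^r+\norm{r_n}_r^r$. Since $Tr_n\to0$ strongly, the contribution of $r_n$ to $J$ vanishes.
\end{itemize}
Once $J(f)\ge K^*$ is known, homogeneity with $\rho+q>0$ forces $\norm{f}_r=1$, since otherwise rescaling would exceed $K^*$.

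\emph{Euler equation.} For $f>0$ a.e.\ and bounded test functions $\varphi$, I differentiate $\varepsilon\mapsto J(f+\varepsilon\varphi)$ and $\varepsilon\mapsto\norm{f+\varepsilon\varphi}_r^r$ at $\varepsilon=0$. This gives the gradients
\[
\rho\,T^*\big((Tf)^{\rho-1}f^q\big)+q(Tf)^\rho f^{q-1}
\qquad\text{and}\qquad
r f^{r-1}.
\]
A Lagrange multiplier yields \eqref{eq:boyd-eigen} for some $\lambda$. Pairing the equation with $f$ itself and using Euler's identity for the $(\rho+q)$-homogeneous functional $J$ gives
\[
r\lambda=(\rho+q)J(f)=(\rho+q)K^*,
\]
which is the stated value of $\lambda$.

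\emph{Positivity.} Suppose the extremizer vanished on a set $E$ of positive measure. Under Boyd's strict positivity of the kernel, $T^*\big((Tf)^{\rho-1}f^q\big)>0$ everywhere, in particular on $E$. The perturbation $f+\varepsilon\mathbf 1_E$, renormalized, then increases $J$ to first order in $\varepsilon$, because the $\rho$-term is linear in $\varepsilon$. Meanwhile the norm constraint changes only by $O(\varepsilon^r)$ with $r>1$. This contradicts maximality, so $f>0$ a.e. This step must come before the Euler equation, because $f^{q-1}$ requires $f>0$ when $q<1$.

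\emph{Largest eigenvalue.} Suppose $g>0$ with $\norm{g}_r=1$ solves \eqref{eq:boyd-eigen} with some $\lambda'$. Pairing with $g$ gives $r\lambda'=(\rho+q)J(g)\le(\rho+q)K^*$, so $\lambda'\le\lambda$, with equality attained at the extremizer.

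\emph{Main obstacle.} I expect the weak upper semicontinuity of the factor $f^q$ in the existence step, in the case $q>1$, to be the hardest part. There the plan is to pass to Cesàro means to obtain a.e.\ convergence and then apply the Brezis--Lieb splitting, as described above.
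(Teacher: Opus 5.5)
The paper gives no proof of Theorem A. It quotes Boyd's Lemmas~1 and~2 as a model, and in the finite-section setting the paper actually uses, existence is trivial because the ellipsoid $c^*Dc=1$ is compact.

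Against Boyd's statement, most of your outline is sound:
\begin{itemize}
\item the H\"older bound;
\item the concavity argument for $q\le1$;
\item positivity via $f+\varepsilon\mathbf 1_E$, where a first-order gain beats an $O(\varepsilon^r)$ change of norm;
\item $r\lambda=(\rho+q)K^*$ by pairing with $f$, and the maximality of $\lambda$.
\end{itemize}
The Euler step needs test functions supported on $\{f\ge\delta\}$ when $q<1$, but that is routine.

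\textbf{The gap is existence for $1<q<r$.}

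\emph{Ces\`aro means are circular.} By Banach--Saks they converge strongly to the same weak limit $f$, so they are maximizing exactly when $J(f)=K^*$, which is the claim. Nothing in your construction makes them maximizing. For $q>1$, convexity of $t^q$ in fact lowers $\int w\,g^q$ under averaging.

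\emph{Brezis--Lieb does not apply.} It needs $f_n\to f$ a.e.\ for the original sequence, and weak convergence does not provide this. Oscillation is exactly the obstruction. On $[0,1]$, $f_n=f\,(1+\tfrac12\,\mathrm{sgn}\sin 2\pi nx)\rightharpoonup f$, yet
\[
\int wf_n^q\to\tfrac12\big((3/2)^q+(1/2)^q\big)\int wf^q>\int wf^q .
\]
The fact that $Tr_n\to0$ is beside the point, because $r_n$ enters $J$ through the factor $f_n^q$, not through $Tf_n$, which you had already handled.

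So the inequality you need, $\limsup\int(Tf)^\rho f_n^q\le\int(Tf)^\rho f^q$, is false for general bounded sequences. Your argument never uses maximality to exclude oscillation.

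\textbf{The missing idea is a second use of the positivity of $T$.}
\begin{itemize}
\item Since $0<q<r$, pass to a subsequence with $f_n^q\rightharpoonup F$ in $L^{r/q}$.
\item For $q\ge1$ the set $\{(u,v):u\ge0,\ v\ge u^q\}\subset L^r\times L^{r/q}$ is convex and closed, hence weakly closed. Therefore $F\ge f^q$.
\item Put $g=F^{1/q}\ge f$. Weak lower semicontinuity of the norm gives $\norm{g}_{L^r}^r=\norm{F}_{L^{r/q}}^{r/q}\le\liminf\norm{f_n}_{L^r}^r\le1$.
\item Since $T$ is positive, $Tg\ge Tf$, and so
\[
 J(g)=\int(Tg)^\rho F\,\mathrm d\mu\ge\int(Tf)^\rho F\,\mathrm d\mu=\lim_{n\to\infty}J(f_n)=K^*.
\]
The last equality holds because $(Tf)^\rho\in L^{r/(r-q)}=(L^{r/q})^*$.
\end{itemize}
Thus $g$ is a maximizer, and homogeneity forces $\norm{g}_{L^r}=1$. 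A posteriori this gives norm convergence, hence strong convergence, of $f_n^q$ in $L^{r/q}$, so $g=f$. That conclusion cannot be assumed in advance, which is what your Ces\`aro/Brezis--Lieb route implicitly does.
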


Our use of this principle is finite-dimensional and does not involve Boyd's Volterra reduction or its differential equation.  Fix a section $0\le j,k\le N$ and write
\[
 \Phi_N(c)=\int_0^1(c^*B_yc)^{1/(2m)}\,\mathrm{d}y,
 \qquad
 \Lambda_N=\max_{c^*Dc=1}\Phi_N(c).
\]
The ellipsoid $c^*Dc=1$ is compact, so an extremizer exists without an operator-compactness argument.  Since every entry of $B_y$ is nonnegative,
\[
  \Phi_N(|c|)\ge \Phi_N(c),
  \qquad |c|^*D|c|=c^*Dc;
\]
hence an extremizer may be chosen nonnegative.  Put
\[
  R_c(y)=c^*B_yc.
\]
For a nonzero nonnegative $c$, one has $R_c(y)>0$ on $(0,1)$.  The functional is therefore differentiable at $c$.  The Lagrange multiplier equation, followed by multiplication by $c^*$, gives
\begin{equation}
\label{eq:EL-general}
  \int_0^1 R_c(y)^{1/(2m)-1}B_yc\,\mathrm{d}y
  =\Lambda_NDc.
\end{equation}
Thus, with
\[
  \mathcal L_c
  =\int_0^1R_c(y)^{1/(2m)-1}B_y\,\mathrm{d}y,
\]
we have
\begin{equation}
\label{eq:perron-general}
  \mathcal L_cc=\Lambda_NDc.
\end{equation}
All entries of $\mathcal L_c$ are positive.  Equation \eqref{eq:perron-general} consequently shows that $c$ is strictly positive.  Moreover,
\[
  A_c=D^{-1/2}\mathcal L_cD^{-1/2}
\]
is a positive symmetric matrix, and $D^{1/2}c$ is its positive eigenvector.  The Perron--Frobenius theorem \cite[Section 8.2]{HornJohnson2013} gives
\[
  \Lambda_N=\rho(A_c)=\rho(D^{-1}\mathcal L_c).
\]
For every $h>0$, the Collatz--Wielandt upper estimate \cite[Section 8.2]{HornJohnson2013} now reads
\begin{equation}
\label{eq:collatz-bound}
 \Lambda_N
 \le
 \max_{0\le j\le N}
 \frac{(\mathcal L_ch)_j}{D_jh_j}.
\end{equation}
The same bound also follows at once from Lemma \ref{lem:weighted-schur}.

This is where the present argument departs from Boyd's original setting.
\begin{enumerate}[label=(\roman*)]
\item Boyd starts from a fixed integral operator $T$.  Here the positive matrix $\mathcal L_c$ depends on the unknown extremizer through the factor $R_c(y)^{1/(2m)-1}$.
\item Boyd uses compactness and kernel positivity to obtain an extremizer and then studies a nonlinear differential equation.  On each finite section, compactness and strict positivity are elementary, and the Euler equation is the matrix equation \eqref{eq:perron-general}.
\item We do not solve the nonlinear eigenvalue equation.  Instead, we bound its Perron root by constructing a positive supersolution in \eqref{eq:collatz-bound}.  For Theorems \ref{thm:uniform} and \ref{thm:small-even}, the same vector $h_j=(m)_j/j!$ works for every extremizer.  In Theorem \ref{thm:moving}, the supersolution is allowed to depend on $c$.
\item The resulting estimate must be uniform in $N$.  This uniformity, rather than the existence of an extremizer on one finite section, is what permits passage back to the Bergman space.
\end{enumerate}

In particular, if $h_c>0$ satisfies
\[
  \mathcal L_ch_c\le \B(a,1-a)Dh_c,
\]
then \eqref{eq:collatz-bound} yields $\Lambda_N\le\B(a,1-a)$.  A useful class of moving supersolutions is
\[
   h_c(z)=(1-z)^{-m}P_c(1-z),
   \qquad \deg P_c\le m-1.
\]
This is the condition used in Theorem \ref{thm:moving}.

\subsection{The fixed Schur vector}

For the estimates below we take
\[
   h_j=\frac{(m)_j}{j!},
   \qquad
   h(z)=\sum_{j=0}^{\infty}h_jz^j=(1-z)^{-m}.
\]
Then the row action of the matrices $B_y$ simplifies because
\[
  \sum_{k=0}^{\infty}(m+k)_n\frac{(m)_k}{k!}y^k=(m)_n(1-y)^{-m-n}.
\]
Consequently,
\[
   (B_yh)_j
   =y^j(1-y)^{-m}{}_2F_1(j+m,m;\gamma;1-y).
\]
Insert the tangent weight and apply Euler's formula.  The Schur row inequality then reduces to a one-variable moment estimate, and the vector $(m)_j/j!$ appears naturally.  For larger $m$, this fixed vector may be too crude even when the beta formula is true.  At a single parameter pair one can instead try
\[
   h_P(z)=(1-z)^{-m}P(1-z),
   \qquad \deg P\le m-1.
\]

\section{Proofs of the main results}
\label{sec:proofs}

\subsection{Proof of Theorem \ref{thm:p2}}

Let $-1<\alpha<0$, $p=2$, and
\[
   a=\frac{\alpha+2}{2}\in\left(\frac{1}{2},1\right).
\]
Then $\alpha=2a-2$.  The monomials are orthogonal in $A^2_\alpha$, and
\[
  \omega_n:=\norm{z^n}_{A^2_\alpha}^2
  =\frac{\Gamma(2a)\Gamma(n+1)}{\Gamma(n+2a)}.
\]
The unitary map
\[
  f(z)=\sum_{n\ge0}c_nz^n
  \longmapsto
  x_n=\sqrt{\omega_n}\,c_n
\]
sends $A^2_\alpha$ onto $\ell^2$.  Under this map, $\Hcal$ is represented by the positive matrix
\begin{equation}
\label{eq:p2-K}
  K_{nk}=\frac{1}{n+k+1}\left(\frac{\omega_n}{\omega_k}\right)^{1/2}.
\end{equation}
It is enough to prove
\[
  \norm{K}_{\ell^2\to\ell^2}\le \B(a,1-a).
\]

Take the Schur weight
\[
  h_n=\sqrt{\omega_n}\frac{(a)_n}{n!}.
\]

\begin{remark}
The coefficients of the boundary profile $(1-z)^{-a}$ are $(a)_n/n!$.  Passing to the standard $\ell^2$ basis contributes the factor $\sqrt{\omega_n}$.  Hence $h_n$ is simply the boundary coefficient vector in orthonormal coordinates, which accounts for the beta constant in the Schur sums.
\end{remark}

We prove
\begin{equation}
\label{eq:p2-schur1}
  \sum_{k=0}^{\infty}K_{nk}h_k\le \B(a,1-a)h_n,
\end{equation}
and
\begin{equation}
\label{eq:p2-schur2}
  \sum_{n=0}^{\infty}K_{nk}h_n\le \B(a,1-a)h_k.
\end{equation}

For \eqref{eq:p2-schur1},
\[
\sum_{k=0}^{\infty}K_{nk}h_k
=\sqrt{\omega_n}\sum_{k=0}^{\infty}\frac{(a)_k}{k!(n+k+1)}.
\]
Since
\[
  \sum_{k=0}^{\infty}\frac{(a)_k}{k!}x^k=(1-x)^{-a},
\]
we get
\[
\sum_{k=0}^{\infty}\frac{(a)_k}{k!(n+k+1)}
=\int_0^1x^n(1-x)^{-a}\,\mathrm{d}x
=\B(n+1,1-a).
\]
Thus, \eqref{eq:p2-schur1} is equivalent to
\[
  \frac{\B(n+1,1-a)}{(a)_n/n!}\le \B(a,1-a).
\]
But
\[
 \frac{\B(n+1,1-a)}{(a)_n/n!}
 =\B(a,1-a)\frac{\Gamma(n+1)^2}{\Gamma(n+a)\Gamma(n+2-a)}.
\]
Since
\[
  n+1=\frac{n+a+n+2-a}{2},
\]
The log-convexity of the gamma function gives
\[
  \Gamma(n+1)^2\le\Gamma(n+a)\Gamma(n+2-a).
\]
This proves \eqref{eq:p2-schur1}.

For \eqref{eq:p2-schur2}, we need the following lemma.

\begin{lemma}
\label{lem:p2-moment}
Let $1/2<a<1$.  Then for every $k\ge0$,
\[
\sum_{n=0}^{\infty}\frac{\Gamma(n+a)}{\Gamma(n+2a)}\frac{1}{n+k+1}
\le
\B(a,1-a)\frac{\Gamma(k+a)}{\Gamma(k+2a)}.
\]
\end{lemma}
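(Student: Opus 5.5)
We will write both sides of Lemma \ref{lem:p2-moment} as moments $\int_0^1 x^k(\cdot)\,\mathrm dx$ of explicit positive functions, and then compare those functions pointwise on $(0,1)$. A pointwise comparison gives the inequality for every $k\ge0$ at once.

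\emph{Step 1: integral form of the left side.} Use the beta integral
\[
\frac{\Gamma(n+a)}{\Gamma(n+2a)}=\frac{1}{\Gamma(a)}\int_0^1 t^{n+a-1}(1-t)^{a-1}\,\mathrm dt
\]
together with $1/(n+k+1)=\int_0^1 x^{n+k}\,\mathrm dx$. Everything is nonnegative, so Tonelli lets us sum the geometric series in $n$ first. This gives
\[
\text{LHS}=\frac{1}{\Gamma(a)}\int_0^1 x^k\int_0^1\frac{t^{a-1}(1-t)^{a-1}}{1-xt}\,\mathrm dt\,\mathrm dx .
\]
By Euler's integral for ${}_2F_1$, the inner integral equals $\B(a,a)\,{}_2F_1(1,a;2a;x)$. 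Hence
\[
\text{LHS}=\frac{\Gamma(a)}{\Gamma(2a)}\int_0^1 x^k\,{}_2F_1(1,a;2a;x)\,\mathrm dx .
\]

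\emph{Step 2: integral form of the right side.} In the same way,
\[
\frac{\Gamma(k+a)}{\Gamma(k+2a)}=\frac{1}{\Gamma(a)}\int_0^1x^{k}\,x^{a-1}(1-x)^{a-1}\,\mathrm dx .
\]
It therefore suffices to prove the pointwise bound
\[
\frac{\Gamma(a)^2}{\Gamma(2a)}\,{}_2F_1(1,a;2a;x)\le \B(a,1-a)\,x^{a-1}(1-x)^{a-1},\qquad 0<x<1.
\]

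\emph{Step 3: monotonicity.} Apply Euler's transformation:
\[
{}_2F_1(1,a;2a;x)=(1-x)^{a-1}\,{}_2F_1(2a-1,a;2a;x).
\]
The required bound then becomes $\phi(x)\le1$, where
\[
\phi(x)=\frac{\Gamma(a)^2}{\Gamma(2a)\B(a,1-a)}\,x^{1-a}\,{}_2F_1(2a-1,a;2a;x).
\]
Because $1/2<a<1$, we have $2a-1>0$. So every Taylor coefficient of ${}_2F_1(2a-1,a;2a;x)$ is positive, and that function is increasing on $[0,1)$. The factor $x^{1-a}$ is also increasing. Hence $\phi$ is increasing on $(0,1)$.

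Since $c-a-b=2a-(2a-1)-a=1-a>0$, Gauss's summation applies and gives
\[
{}_2F_1(2a-1,a;2a;1)=\frac{\Gamma(2a)\Gamma(1-a)}{\Gamma(1)\Gamma(a)} .
\]
Therefore
\[
\phi(1^-)=\frac{\Gamma(a)\Gamma(1-a)}{\B(a,1-a)}=1,
\]
and $\phi\le1$ on $(0,1)$. Multiplying the pointwise bound by $x^k$ and integrating proves the lemma, with strict inequality for every $k$.

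The only delicate point is Step 3. The key idea there is to spot the Euler transformation that removes the $(1-x)^{a-1}$ singularity. This turns the comparison into the monotonicity of a hypergeometric series with positive coefficients, and the hypothesis $a>1/2$ enters exactly through the sign of $2a-1$. We also need to check that the constant at $x=1$ is exactly $1$, which is the Gauss evaluation above. The interchange of sums and integrals in Steps 1 and 2 is routine, since all integrands are positive.
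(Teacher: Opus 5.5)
Your proof is correct, and it takes a genuinely different route from the paper's.

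Both arguments start from the same representation $S_k=\int_0^1x^kF(x)\,\mathrm dx$ with $F(x)=\frac{\Gamma(a)}{\Gamma(2a)}(1-x)^{a-1}{}_2F_1(a,2a-1;2a;x)$. Both also rely on the fact that the Euler-transformed series has positive coefficients because $a>1/2$.

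The paper then works at the level of moments. It shows that $R_k=S_k/D_k$ increases in $k$ by a one-sign-change argument: the kernel $g_k$ changes sign once, has positive integral, and is paired with the increasing function $\Theta$. It then identifies $\lim_k R_k=\B(a,1-a)$ from $A_n\sim n^{-a}$ and a Riemann-sum argument.

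You instead write $D_k$ as a moment of $x^{a-1}(1-x)^{a-1}/\Gamma(a)$ as well and compare the two densities pointwise. The extra factor $x^{1-a}$ keeps the quotient increasing. Gauss's summation then fixes its limit at exactly $1$; passing to $x\to1^-$ uses monotone convergence, which is immediate since the coefficients are positive.

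What your route buys:
\begin{itemize}
\item It is shorter.
\item It gives strict inequality for every $k$ at once.
\item It avoids the asymptotic step, which the paper only sketches as standard.
\item It proves the stronger pointwise bound $F(x)\le \B(a,1-a)\,x^{a-1}(1-x)^{a-1}/\Gamma(a)$. Hence the inequality survives with $x^k$ replaced by any nonnegative weight.
\end{itemize}

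What the paper's route buys is the extra information that the row ratios $R_k$ increase monotonically to the beta constant. It also rehearses the one-sign-change device that the paper reuses to reduce all rows to the base moment in the proof of Theorem~\ref{thm:small-even}. Neither of these is needed for the lemma itself.
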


\begin{proof}
Set
\[
 A_n=\frac{\Gamma(n+a)}{\Gamma(n+2a)},
 \qquad
 D_k=\frac{\Gamma(k+a)}{\Gamma(k+2a)},
\]
\[
 S_k=\sum_{n=0}^{\infty}\frac{A_n}{n+k+1},
 \qquad
 R_k=\frac{S_k}{D_k}.
\]
We prove that $R_k$ is increasing and that $R_k\to \B(a,1-a)$.  It then follows that $R_k\le\B(a,1-a)$.

Let
\[
  F(x)=\sum_{n=0}^{\infty}A_nx^n.
\]
Then
\[
  S_k=\int_0^1x^kF(x)\,\mathrm{d}x.
\]
Moreover, \[
  F(x)=\frac{\Gamma(a)}{\Gamma(2a)}{}_2F_1(a,1;2a;x).
\]
Euler's transformation gives
\[
  F(x)=\frac{\Gamma(a)}{\Gamma(2a)}(1-x)^{a-1}\Theta(x),
\]
where
\[
  \Theta(x)={}_2F_1(a,2a-1;2a;x).
\]
Since $a>1/2$, the Taylor coefficients of $\Theta$ are nonnegative, so $\Theta$ is increasing on $(0,1)$.

Because
\[
  \frac{D_{k+1}}{D_k}=\frac{k+a}{k+2a},
\]
the inequality $R_{k+1}\ge R_k$ is equivalent to
\[
  (k+2a)S_{k+1}-(k+a)S_k\ge0.
\]
The left-hand side is a positive constant times
\[
  \int_0^1 \Theta(x)g_k(x)\,\mathrm{d}x,
\]
where
\[
  g_k(x)=x^k((k+2a)x-(k+a))(1-x)^{a-1}.
\]
The function $g_k$ changes sign once, at
\[
  x_k=\frac{k+a}{k+2a},
\]
from negative to positive.  Also, \[
\int_0^1g_k(x)\,\mathrm{d}x
=(k+2a)\B(k+2,a)-(k+a)\B(k+1,a)
=\frac{a(1-a)}{k+a+1}\B(k+1,a)>0.
\]
Since $\Theta$ is increasing,
\[
\int_0^1\Theta(x)g_k(x)\,\mathrm{d}x
=\int_0^1(\Theta(x)-\Theta(x_k))g_k(x)\,\mathrm{d}x
 +\Theta(x_k)\int_0^1g_k(x)\,\mathrm{d}x\ge0.
\]
Thus, $R_k$ is increasing.

Finally, $A_n\sim n^{-a}$ and $D_k\sim k^{-a}$.  A standard Riemann-sum argument gives
\[
  S_k\sim k^{-a}\int_0^\infty \frac{t^{-a}}{1+t}\,\mathrm{d}t
  =k^{-a}\B(1-a,a).
\]
Hence, \[
  \lim_{k\to\infty}R_k=
  \B(1-a,a)=\B(a,1-a).
\]
The lemma follows.
\end{proof}

Now
\[
\sum_{n=0}^{\infty}K_{nk}h_n
=\frac{1}{\sqrt{\omega_k}}\sum_{n=0}^{\infty}\frac{\omega_n(a)_n/n!}{n+k+1}.
\]
Since
\[
  \omega_n\frac{(a)_n}{n!}
  =\frac{\Gamma(2a)}{\Gamma(a)}\frac{\Gamma(n+a)}{\Gamma(n+2a)},
\]
Lemma \ref{lem:p2-moment} gives
\[
\sum_{n=0}^{\infty}K_{nk}h_n
\le
\frac{\Gamma(2a)}{\Gamma(a)\sqrt{\omega_k}}
\B(a,1-a)\frac{\Gamma(k+a)}{\Gamma(k+2a)}.
\]
Using
\[
  \omega_k=\frac{\Gamma(2a)\Gamma(k+1)}{\Gamma(k+2a)},
\]
the last expression equals
\[
  \B(a,1-a)\sqrt{\omega_k}\frac{\Gamma(k+a)}{\Gamma(a)\Gamma(k+1)}
  =\B(a,1-a)h_k.
\]
Thus, \eqref{eq:p2-schur2} holds.  Schur's test gives
\[
  \norm{\Hcal}_{A^2_\alpha\to A^2_\alpha}\le\B(a,1-a).
\]
The reverse inequality is Proposition \ref{prop:lower}.  This proves Theorem \ref{thm:p2}.

\subsection{Proof of Theorem \ref{thm:uniform}}

Let $m\ge2$, $p=2m$, $\gamma=2ma$, and $\alpha=\gamma-2$.  By Lemma \ref{lem:square-reduction}, it suffices to prove \eqref{eq:SF}.  For $a\le1/2$, the usual affine-composition estimate gives the result.  We recall the point briefly.  In the horodisk form one writes the relevant operator as
\[
   G\longmapsto \Phi_r^{\,m(1-2a)}G\circ\Phi_r,
   \qquad \Phi_r(z)=1-r+rz,
\]
with $0<r<1/2$.  If $a\le1/2$, then the exponent $m(1-2a)$ is nonnegative and $|\Phi_r(z)|\le1$ on $\D$.  Hence, the multiplier has modulus at most one.  The standard affine composition estimate on $A^2_{\gamma-2}$ then gives the factor $r^{-a}$, and the remaining integral is exactly
\[
   \int_0^{1/2}\frac{(1-2r)^{a-1}}{1-r}r^{-a}\,\mathrm{d}r=
   \int_0^1 y^{a-1}(1-y)^{-a}\,\mathrm{d}y=\B(a,1-a).
\]
Thus, only the hard half $a>1/2$ requires the Schur certificate below.  We therefore assume
\[
   \frac{1}{2}<a\le \frac{m}{2m-1}.
\]

We use the linear tangent reduction and the weighted Schur test.  Take
\[
  h_j=\frac{(m)_j}{j!}.
\]
We compute
\[
\sum_{k=0}^{\infty}(B_y)_{jk}h_k
=y^j(1-y)^{-m}{}_2F_1(j+m,m;\gamma;1-y).
\]
Thus, \begin{equation}
\label{eq:row-general}
(Lh)_j=
\int_0^1 y^{j+(2m-1)(1-a)}(1-y)^{(2m-1)a-m}
{}_2F_1(j+m,m;\gamma;1-y)\,\mathrm{d}y.
\end{equation}
Since $a>1/2$, $\gamma=2ma>m$, and Euler's integral representation applies:
\[
{}_2F_1(j+m,m;\gamma;1-y)
=\frac{\Gamma(\gamma)}{\Gamma(m)\Gamma(\gamma-m)}
\int_0^1\frac{t^{m-1}(1-t)^{\gamma-m-1}}{(1-(1-y)t)^{j+m}}\,\mathrm{d}t.
\]
With the Boyd change of variables
\[
  s=\frac{y}{1-(1-y)t},
\]
\eqref{eq:row-general} is equivalent to the moment inequality
\begin{equation}
\label{eq:M-general}
\int_0^1s^{j+(2m-1)(1-a)}(1-s)^{(2m-1)a-m}J_{m,a}(s)\,\mathrm{d}s
\le
\B(a,1-a)\B(j+m,2ma-m),
\end{equation}
where
\[
  J_{m,a}(s)=\int_0^1\frac{t^{m-1}(1-t)^{a-1}}{1-st}\,\mathrm{d}t.
\]
We claim
\begin{equation}
\label{eq:J-point}
  J_{m,a}(s)\le \B(a,1-a)(1-s)^{a-1}.
\end{equation}
Let $\delta=1-s$ and set $u=(1-t)/\delta$.  Then
\[
J_{m,a}(s)=\delta^{a-1}
\int_0^{1/\delta}\frac{(1-\delta u)^{m-1}u^{a-1}}{1+su}\,\mathrm{d}u.
\]
Since $0\le1-\delta u\le1$ and $m\ge2$,
\[
 (1-\delta u)^{m-1}\le 1-\delta u,
\]
and
\[
 \frac{1-\delta u}{1+su}\le \frac{1}{1+u}.
\]
Therefore, \[
J_{m,a}(s)\le \delta^{a-1}\int_0^\infty\frac{u^{a-1}}{1+u}\,\mathrm{d}u
=\B(a,1-a)(1-s)^{a-1}.
\]
Substituting \eqref{eq:J-point} into \eqref{eq:M-general}, it remains to compare
\[
 \int_0^1s^{j+(2m-1)(1-a)}(1-s)^{2ma-m-1}\,\mathrm{d}s
\]
with
\[
 \int_0^1s^{j+m-1}(1-s)^{2ma-m-1}\,\mathrm{d}s.
\]
This follows from
\[
  (2m-1)(1-a)\ge m-1,
\]
which is exactly $a\le m/(2m-1)$.  Hence, \[
(Lh)_j\le\B(a,1-a)D_jh_j.
\]
By Lemma \ref{lem:weighted-schur}, $L\le \B(a,1-a)D$.  The tangent reduction gives \eqref{eq:SF}.  Proposition \ref{prop:lower} completes the proof of the sharp formula.

\subsection{Proof of Theorem \ref{thm:small-even}}

Let $m=2,3,4,5$.  Theorem \ref{thm:uniform} proves the result for
\[
  0<a\le \frac{m}{2m-1}.
\]
It remains to treat
\[
  \frac{m}{2m-1}<a<1.
\]
We continue with the same Schur weight $h_j=(m)_j/j!$.  The row inequality is still equivalent to \eqref{eq:M-general}.  Define
\[
\mathcal A_{m,a}(s)
  =s^{m-(2m-1)a}(1-s)^{1-a}J_{m,a}(s).
\]
Then \eqref{eq:M-general} is equivalent to
\begin{equation}
\label{eq:Rmj}
\int_0^1s^{j+m-1}(1-s)^{2ma-m-1}
\left[\B(a,1-a)-\mathcal A_{m,a}(s)\right]\,\mathrm{d}s\ge0.
\end{equation}

We first record the sign pattern.  Since
\[
  J_{m,a}(s)=\B(m,a){}_2F_1(1,m;m+a;s),
\]
Euler's transformation gives
\[
  (1-s)^{1-a}J_{m,a}(s)
  =\B(m,a){}_2F_1(a,m+a-1;m+a;s).
\]
Equivalently,
\[
  \mathcal A_{m,a}(s)
  =\frac{\Gamma(m)}{(a)_{m-1}}s^{1-2ma}H(s),
\]
where
\[
  H(s)=\int_0^s u^{a+m-2}(1-u)^{-a}\,\mathrm{d}u.
\]
Let $C=H(1)=\B(a+m-1,1-a)$ and $F(s)=s^{1-2ma}H(s)$.  Then the sign of $\mathcal A_{m,a}(s)-\B(a,1-a)$ is the sign of $F(s)-C$.  A direct computation gives
\[
F'(s)=s^{-2ma}N(s),
\]
where
\[
N(s)=s^{a+m-1}(1-s)^{-a}-(2ma-1)H(s),
\]
and
\[
N'(s)=s^{a+m-2}(1-s)^{-a-1}
\left[m-(2m-1)a+m(2a-1)s\right].
\]
In the range $a>m/(2m-1)$, the bracket changes sign once, from negative to positive.  Moreover, $N(s)<0$ near $0$ and $N(s)\to+\infty$ as $s\to1^-$.  Hence, $F$ first decreases and then increases.  Since $F(s)\to+\infty$ as $s\downarrow0$, $F(1)=C$, and $F(s)<C$ near $1$, we conclude that
\begin{equation}
\label{eq:osc-general}
  \mathcal A_{m,a}(s)-\B(a,1-a)
  \quad\text{has sign pattern}\quad +\to-.
\end{equation}
Thus, \[
  \B(a,1-a)-\mathcal A_{m,a}(s)
\]
has sign pattern $-\to+$.  Consequently, all inequalities \eqref{eq:Rmj} follow from the base moment $j=0$:
\begin{equation}
\label{eq:base-moment}
\int_0^1s^{m-1}(1-s)^{2ma-m-1}
\left[\B(a,1-a)-\mathcal A_{m,a}(s)\right]\,\mathrm{d}s\ge0.
\end{equation}
Indeed, if $s_0$ is the sign-change point, multiplication by $s^j$ weakens the negative part on $(0,s_0)$ and strengthens the positive part on $(s_0,1)$.

Appendix \ref{app:hard-small} verifies \eqref{eq:base-moment} for $m=2,3,4,5$.  It follows that \eqref{eq:Rmj} holds for every $j\ge0$.  Lemma \ref{lem:weighted-schur} now gives $L\le \B(a,1-a)D$, and the tangent reduction yields \eqref{eq:SF}.  The lower bound from Proposition \ref{prop:lower} completes the proof.

\begin{remark}
The sign-change argument \eqref{eq:osc-general} is uniform in $m$.  What depends on $m$ is the base moment \eqref{eq:base-moment}.  Elementary estimates suffice up to $m=5$, but the expressions become much less manageable afterward.  More elaborate certificates may treat additional parameters, although Theorem \ref{thm:counterexample} shows that they cannot give the beta bound everywhere.
\end{remark}

\subsection{Proof of Theorem \ref{thm:moving}}

The finite-section extremal problem and the nonlinear Euler equation are given in Section \ref{sec:boyd-perron}.  If $h_c>0$ satisfies
\[
  \mathcal L_ch_c\le\B(a,1-a)Dh_c,
\]
then, by the generalized Schur/Collatz--Wielandt principle, the Perron eigenvalue in
\[
  \mathcal L_cc=\Lambda_NDc
\]
satisfies
\[
  \Lambda_N\le \B(a,1-a).
\]
Hence, \eqref{eq:finite-target} holds on every finite section with the sharp constant.  Monotone truncation and Fatou's lemma give \eqref{eq:SF} as $N\to\infty$.  Lemma \ref{lem:square-reduction} supplies the upper bound for $\Hcal$, and Proposition \ref{prop:lower} supplies the reverse inequality.

\section{An explicit real-exponent failure half-line}
\label{sec:counterexample}

The same test function will be used for every $p$ in the half-line.  Its form is suggested by the high-power growth-space limit, but the argument itself stays at finite $p$.

\subsection{Point evaluation and a subcritical input}

\begin{lemma}
\label{lem:explicit-evaluation}
If $\alpha>-1$, $p>0$, $h\in A_\alpha^p$, and $z\in\D$, then
\[
 \norm{h}_{A_\alpha^p}
 \ge(1-|z|^2)^{(\alpha+2)/p}|h(z)|.
\]
\end{lemma}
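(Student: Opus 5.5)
The plan is the standard subharmonicity argument combined with Möbius invariance of the weighted measure. First I would treat the case $z=0$. Since $|h|^p$ is subharmonic for every $p>0$ (as $h$ is analytic), the sub-mean value property on each circle gives
\[
 |h(0)|^p\le \frac{1}{2\pi}\int_0^{2\pi}|h(re^{i\theta})|^p\,\mathrm{d}\theta,\qquad 0<r<1.
\]
Integrating this against $2(\alpha+1)r(1-r^2)^\alpha\,\mathrm{d}r$, which is the radial part of $\mathrm{d}A_\alpha$ and has total mass $1$ by the normalization in \eqref{eq:monomial-norm} with $n=0$, yields $|h(0)|^p\le\norm{h}_{A^p_\alpha}^p$.

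For general $z\in\D$, I would set $\varphi_z(w)=(z-w)/(1-\bar z w)$ and consider
\[
 g(w)=h(\varphi_z(w))\,\varphi_z'(w)^{(\alpha+2)/p},
\]
using an analytic branch of the power, which exists because $\varphi_z'(w)=-(1-|z|^2)/(1-\bar zw)^2$ has no zeros on $\D$. Only $|\varphi_z'|^{\alpha+2}$ actually enters. The key identity is
\[
 (1-|\varphi_z(w)|^2)^\alpha\,|\varphi_z'(w)|^2=(1-|w|^2)^\alpha\,|\varphi_z'(w)|^{-\alpha}.
\]
It follows from $1-|\varphi_z(w)|^2=|\varphi_z'(w)|(1-|w|^2)$. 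Changing variables $u=\varphi_z(w)$ with Jacobian $|\varphi_z'(w)|^2$ then gives $\norm{g}_{A^p_\alpha}=\norm{h}_{A^p_\alpha}$, so the map $h\mapsto g$ is an isometry of $A^p_\alpha$.

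Applying the $z=0$ case to $g$, and using $\varphi_z(0)=z$ and $|\varphi_z'(0)|=1-|z|^2$, we get
\[
 \norm{h}_{A^p_\alpha}\ge|g(0)|=(1-|z|^2)^{(\alpha+2)/p}|h(z)|,
\]
which is the claimed estimate.

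The only point needing care is the change-of-variables identity, which is a short computation. Subharmonicity of $|h|^p$ for $0<p<1$ is classical: $\log|h|$ is subharmonic, hence so is $e^{p\log|h|}$. I therefore expect no real obstacle.
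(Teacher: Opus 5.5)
Your proposal follows the paper's own proof. The paper applies subharmonicity at the origin to the weighted automorphism $U_zh(w)=h(\varphi_z(w))\bigl((1-|z|^2)/(1-\overline{z}w)^2\bigr)^{(\alpha+2)/p}$, which is your $g$ up to a unimodular constant. You supply the details the paper leaves implicit: the sub-mean value inequality integrated against the radial weight of total mass one, and the change of variables.

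One correction is needed. Your displayed ``key identity'' has the wrong exponent on the right-hand side. From $1-|\varphi_z(w)|^2=|\varphi_z'(w)|(1-|w|^2)$ one gets
\[
(1-|\varphi_z(w)|^2)^\alpha|\varphi_z'(w)|^2=(1-|w|^2)^\alpha|\varphi_z'(w)|^{\alpha+2},
\]
not $(1-|w|^2)^\alpha|\varphi_z'(w)|^{-\alpha}$. At $w=0$ your version would force $(1-|z|^2)^{2\alpha+2}=1$, which is false for $z\neq0$.

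The exponent $\alpha+2$ is exactly what matches $|g|^p=|h\circ\varphi_z|^p|\varphi_z'|^{\alpha+2}$. With it, the substitution $u=\varphi_z(w)$ gives $\norm{g}_{A^p_\alpha}=\norm{h}_{A^p_\alpha}$. With that fix the argument is complete.
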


\begin{proof}
Let $\varphi_z$ be the disk automorphism interchanging $0$ and $z$.  The weighted automorphism
\[
 U_zh(w)
 =
 h(\varphi_z(w))
 \left(\frac{1-|z|^2}{(1-\overline zw)^2}\right)^{(\alpha+2)/p}
\]
is an isometry of $A_\alpha^p$.  Subharmonicity and the normalization of $\mathrm dA_\alpha$ give
\[
 |U_zh(0)|\le\norm{U_zh}_{A_\alpha^p}.
\]
Since
\[
 |U_zh(0)|=(1-|z|^2)^{(\alpha+2)/p}|h(z)|,
\]
the result follows.
\end{proof}

\begin{lemma}
\label{lem:explicit-input}
Let $0<b<a<1$, $p>1/(a-b)$, and
\[
 f_b(z)=(1-z^2)^{-b}.
\]
For $\alpha=ap-2$,
\[
 \norm{f_b}_{A_\alpha^p}^p
 \le\frac{ap-1}{(a-b)p-1}.
\]
\end{lemma}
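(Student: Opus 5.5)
We need $\int_\D |1-z^2|^{-bp}(\alpha+1)(1-|z|^2)^{\alpha}\,dA \le (ap-1)/((a-b)p-1)$, where $\alpha+1=ap-1$ and $\sigma:=bp$.

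The plan is to reduce everything to a one-variable integral by bounding the modulus of $1-z^2$ from below by a radial quantity. Write $z=re^{i\theta}$. Then $|1-z^2|\ge 1-|z|^2=1-r^2$, so
\[
 |f_b(z)|^p=|1-z^2|^{-bp}\le (1-r^2)^{-bp}.
\]
This crude pointwise bound discards all angular information. It is still enough here, because $p>1/(a-b)$ means exactly $(a-b)p-1>0$, and this positivity is what keeps the resulting radial integral finite.

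Next, integrate in polar coordinates with the normalized area measure, $dA=\pi^{-1}r\,dr\,d\theta$, and substitute $u=r^2$:
\[
 \norm{f_b}_{A^p_\alpha}^p\le (ap-1)\int_0^1(1-u)^{\alpha-bp}\,du=\frac{ap-1}{\alpha-bp+1}.
\]
Since $\alpha=ap-2$, we have $\alpha-bp+1=(a-b)p-1>0$, which gives the stated bound. Positivity of this exponent also shows the integral converges, so $f_b\in A^p_\alpha$.

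I expect no real obstacle in this argument. The only points needing care are the inequality $|1-z^2|\ge 1-|z|^2$, which follows from $|z^2|=|z|^2$ and the triangle inequality, and the normalization constant $(\alpha+1)=ap-1$. I would also note the standing conditions: $ap>1$ holds because $p>1/(a-b)>1/a$, so $\alpha>-1$ as required.
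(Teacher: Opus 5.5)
Your proposal is correct and follows essentially the same route as the paper: the pointwise bound $|1-z^2|\ge 1-|z|^2$ reduces the integrand to $(1-|z|^2)^{(a-b)p-2}$, and a radial integration with the normalization constant $ap-1$ gives $(ap-1)/((a-b)p-1)$. Your remark that $p>1/(a-b)>1/a$ ensures $\alpha>-1$ is a small check the paper leaves implicit.
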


\begin{proof}
Because $|1-z^2|\ge1-|z|^2$,
\[
 |f_b(z)|^p(1-|z|^2)^{ap-2}
 \le(1-|z|^2)^{(a-b)p-2}.
\]
With $s=1-|z|^2$ and normalized area measure,
\[
 (ap-1)\int_0^1s^{(a-b)p-2}\,\mathrm ds
 =\frac{ap-1}{(a-b)p-1}.
\]
\end{proof}

At a real point $r\in(0,1)$, the two lemmas imply
\begin{equation}
 \norm{\Hcal}_{A_{ap-2}^p\to A_{ap-2}^p}
 \ge
 (1-r^2)^a\Hcal f_b(r)
 \left(\frac{(a-b)p-1}{ap-1}\right)^{1/p}.
 \label{eq:explicit-master}
\end{equation}

\subsection{Monotonicity in \texorpdfstring{$p$}{p}}

Fix $a>b$ and put $d=a-b$.  The $p$-dependent factor in \eqref{eq:explicit-master} is
\[
 G(p)=\left(\frac{dp-1}{ap-1}\right)^{1/p},
 \qquad p>\frac1d.
\]
Direct differentiation gives
\begin{equation}
 \frac{\mathrm d}{\mathrm dp}\log G(p)
 =
 \frac1{p^2}
 \left[
 \frac{bp}{(dp-1)(ap-1)}
 -\log\left(\frac{dp-1}{ap-1}\right)
 \right]>0.
 \label{eq:explicit-monotonicity}
\end{equation}
Both terms in brackets are positive.  Consequently, a strict lower bound at one exponent propagates to every larger real exponent without changing the test function.

\subsection{The certified endpoint}

Take
\[
 a=a_0=\frac{800001}{1000000},
 \qquad b=\frac45,
 \qquad d=\frac1{1000000},
 \qquad r=\frac{999}{1000},
\]
and write
\[
 \delta=1-r^2=\frac{1999}{1000000}.
\]
At $p_0=1100000$,
\[
 \frac{dp_0-1}{ap_0-1}=\frac1{8800001}.
\]

Rationalizing the denominator by
\[
 \frac{1}{1-rt}=\frac{1+rt}{1-r^2t^2}
\]
and then putting $x=t^2$ gives
\begin{align*}
 \Hcal f_{4/5}(r)
 &=\frac12\B\left(\frac12,\frac15\right)
 {}_2F_1\left(1,\frac12;\frac7{10};r^2\right)\\
 &\quad+\frac{5r}{2}{}_2F_1\left(1,1;\frac65;r^2\right).
\end{align*}
Apply the Gauss connection formula at $1$, with $1-r^2=\delta$.  The regular connection coefficients reduce, by the gamma recurrence, to $3/8$ and $-1/4$, respectively.  After the prefactors in the preceding display are included, each of the two singular terms is
\[
 \frac12\B\left(\frac45,\frac15\right)
 r^{3/5}\delta^{-4/5}.
\]
Their sum gives the leading term in the following exact expression:
\begin{align}
 \Phi
 &:=
 \delta^{4/5}\Hcal f_{4/5}(r)
 \notag\\
 &=
 \B\left(\frac45,\frac15\right)r^{3/5}
 \notag\\
 &\quad+
 \delta^{4/5}
 \left[
 \frac3{16}\B\left(\frac12,\frac15\right)
 {}_2F_1\left(1,\frac12;\frac95;\delta\right)
 -
 \frac{5r}{8}{}_2F_1\left(1,1;\frac95;\delta\right)
 \right].
 \label{eq:explicit-Phi}
\end{align}
Thus, the right-hand side of \eqref{eq:explicit-master}, divided by the proposed beta value, is
\begin{equation}
 Q=
 \delta^{1/1000000}
 \frac{\Phi}{\B(a_0,1-a_0)}
 8800001^{-1/1100000}.
 \label{eq:explicit-Q}
\end{equation}

This scalar inequality can be certified using positive rational series:
\begin{align*}
 \B\left(\frac12,\frac15\right)
 &=10\,2^{-4/5}
   \sum_{k=0}^\infty
   \frac{(4/5)_k}{k!\,2^k(5k+1)},\\
 {}_2F_1\left(1,\frac12;\frac95;\delta\right)
 &=\sum_{n=0}^\infty\frac{(1/2)_n}{(9/5)_n}\delta^n,\\
 {}_2F_1\left(1,1;\frac95;\delta\right)
 &=\sum_{n=0}^\infty\frac{n!}{(9/5)_n}\delta^n.
\end{align*}
The successive-term ratio is less than $1/2$ in the first series and less than $\delta<1/500$ in the other two.  Truncation after $81$, $9$, and $9$ terms, respectively, with geometric tail bounds and alternating Taylor bounds for the sine factors, yields
\begin{align*}
 6.26865312408603
 &<\B\left(\frac12,\frac15\right)
 <6.26865312408604,\\
 5.3454065393010
 &<\Phi
 <5.3454065393012,\\
 5.3448197717724
 &<\B(a_0,1-a_0)
 <5.3448197717726.
\end{align*}
Substitution in \eqref{eq:explicit-Q} gives the outward-rounded enclosure
\begin{equation}
 1.00008902869907
 <Q
 <1.00008902869908.
 \label{eq:explicit-certificate}
\end{equation}

\begin{proof}[Proof of Theorem \ref{thm:counterexample}]
At $p=p_0$, equations \eqref{eq:explicit-master} and \eqref{eq:explicit-certificate} give a strict excess over $\B(a_0,1-a_0)$.  Equation \eqref{eq:explicit-monotonicity} propagates the same strict inequality to every real $p\ge p_0$.  Finally, $0<a_0<1$ and $a_0p>1$, so
\[
 -1<\alpha_p=a_0p-2<p-2.
\]
This proves the theorem.
\end{proof}

\begin{remark}
The numerical check uses three positive series, geometric tail bounds, and elementary Taylor enclosures.  The companion script certifies the displayed decimals; no other part of the proof depends on it.
\end{remark}

\section{Scope of the certificate program}
\label{sec:scope}

The counterexample changes the global interpretation of the Boyd--Schur program but does not diminish its local force.  Theorems \ref{thm:p2}, \ref{thm:uniform}, and \ref{thm:small-even} identify regions in which the boundary beta profile is indeed extremal.  At other individual parameter pairs, Theorem \ref{thm:moving} remains a precise sufficient criterion.  What is impossible is a universal family of beta certificates covering every even exponent and every admissible weight.

\subsection{The unresolved case \texorpdfstring{$p=12$}{p=12}}
The first exponent beyond the full-range theorems is $m=6$.  A shifted fixed certificate $h_j=(4)_j/j!$ leads to the positive kernel
\[
 P_n(y)={}_2F_1(-n,2;6;y)
 =20\int_0^1r(1-r)^3(1-yr)^n\,\mathrm dr.
\]
The estimate
\[
 P_n(y)\le \frac{20}{y^2(n+4)(n+5)}
\]
reduces part of the interval $6/11<a<10/11$ to an explicit base moment, but the artificial singularity at $y=0$ prevents it from reaching $10/11\le a<1$.  The exact positive kernel or a quadratic moving certificate
\[
 h(z)=(1-z)^{-6}\left(1+\lambda_1(1-z)+\lambda_2(1-z)^2\right)
\]
may still decide the missing $p=12$ region.  The high-power counterexample alone gives no negative conclusion for this fixed exponent.

\subsection{A local moment criterion}
For a proposed multi-layer certificate $h_P$, the residual rows often have the Hausdorff-moment form
\[
 \Delta_j(P;c)=\int_0^1x^j\,\mathrm d\sigma_{P,c}(x),
\]
where $\sigma_{P,c}$ is a signed measure depending on the Boyd extremizer.  If $P$ makes the first $R$ residual rows vanish, a sufficient condition for all remaining rows to be nonnegative is
\[
 (-1)^R\int_{[0,t]}
 \frac{(t-x)^{R-1}}{(R-1)!}\,
 \mathrm d\sigma_{P,c}(x)\ge0,
 \qquad 0\le t\le1.
\]
This is the higher-order analogue of the one-sign-change argument used in the fixed cases.  It remains useful as a local positivity test, but Theorem \ref{thm:counterexample} implies that it must fail for at least one finite-section extremizer at $a=a_0$ for every even $p\ge1100000$.

\appendix

\section{Hypergeometric and beta identities}
\label{app:identities}

We use several standard formulas.  The beta--gamma identity and Euler's reflection formula are \cite[Eqs.~(5.12.1) and (5.5.3)]{DLMF}
\[
 \B(x,y)=\frac{\Gamma(x)\Gamma(y)}{\Gamma(x+y)},
 \qquad
 \B(a,1-a)=\frac{\pi}{\sin(\pi a)}
\]
for $x,y>0$ and $0<a<1$.  We also use
\[
 \B(a,1-a)=\int_0^\infty\frac{u^{a-1}}{1+u}\,\mathrm du.
\]
The hypergeometric integral representation and Euler transformation are, respectively, \cite[Eqs.~(15.6.1) and (15.8.1)]{DLMF}
\[
 {}_2F_1(A,B;C;z)
 =\frac{\Gamma(C)}{\Gamma(B)\Gamma(C-B)}
 \int_0^1t^{B-1}(1-t)^{C-B-1}(1-zt)^{-A}\,\mathrm dt,
\]
valid for $C>B>0$, and
\[
 {}_2F_1(A,B;C;z)
 =(1-z)^{C-A-B}{}_2F_1(C-A,C-B;C;z).
\]

We shall also use the elementary Bernstein criterion.  If
\[
 P(x)=\sum_{k=0}^{d}b_k\binom dkx^k(1-x)^{d-k},
\]
then $b_k\ge0$ for all $k$ implies $P\ge0$ on $[0,1]$; strict positivity of all coefficients gives $P>0$ on $(0,1)$.  This follows immediately from the nonnegativity of the Bernstein basis.

\section{Base moment certificates for \texorpdfstring{$m=2,3,4,5$}{m=2,3,4,5}}
\label{app:hard-small}

We verify \eqref{eq:base-moment} for $m=2,3,4,5$ in the range $m/(2m-1)<a<1$.  Fubini's theorem, beta recurrence, and Euler's reflection formula reduce each case to an elementary inequality.  The final step uses either monotonicity or the Bernstein criterion recalled in Appendix \ref{app:identities}.

\subsection{The case \texorpdfstring{$m=2$}{m=2}}

Here the hard range is $2/3<a<1$.  Put $x=1-a$, so $0<x<1/3$.  The base moment reduces to
\[
  \frac{\B(a+1,1-a)}{4a-1}\ge \B(3-3a,3a-1).
\]
In terms of $x$, this is equivalent to
\[
  (3-4x)(1-3x)\sin(\pi x)
  \le (1-x)\sin(3\pi x).
\]
Dividing by $\sin(\pi x)>0$ and using
\[
  \frac{\sin(3\pi x)}{\sin(\pi x)}=1+2\cos(2\pi x),
\]
it is enough to prove
\[
  \Phi(x)=2\cos(2\pi x)+9x-2\ge0,
  \qquad 0<x<1/3.
\]
One has $\Phi(0)=\Phi(1/3)=0$,
\[
  \Phi'(x)=9-4\pi\sin(2\pi x),
  \qquad
  \Phi''(x)=-8\pi^2\cos(2\pi x).
\]
Thus, $\Phi'$ decreases on $(0,1/4)$ and increases on $(1/4,1/3)$, with $\Phi'(0)>0$, $\Phi'(1/4)<0$, and $\Phi'(1/3)<0$.  Therefore, $\Phi$ rises once and then decreases to $0$, hence $\Phi\ge0$.  This proves the base moment for $m=2$.

\subsection{The case \texorpdfstring{$m=3$}{m=3}}

The hard range is $3/5<a<1$.  Put $x=1-a$, $0<x<2/5$.  After Fubini and the chord bound for
\[
  P_b(x)=\int_0^x\frac{v^b}{1+v}\,\mathrm{d}v,
\]
the base moment reduces to the beta comparison
\begin{align*}
 \frac{2\B(a+2,1-a)}{(6a-2)(6a-1)}
 &\ge
 \B(5a-2,6-5a)\\
 &\quad+\frac{1}{6a-2}\B(5a-1,5-5a).
\end{align*}
Using beta recurrence and reflection, this is equivalent to
\[
 \frac{\lvert\sin(5\pi x)\rvert}{\sin(\pi x)}
 \ge
 \frac{(5-6x)(2-5x)\lvert1-5x\rvert(1+5x-10x^2)}{2(2-x)(1-x)}.
\]
The elementary bound
\[
 \frac{\lvert\sin(5\pi x)\rvert}{\sin(\pi x)}
 \ge \frac{5}{2}\lvert1-5x\rvert(2-5x),
\]
for $0<x<2/5$, together with
\[
 \frac{(5-6x)(1+5x-10x^2)}{2(2-x)(1-x)}\le \frac{5}{2},
\]
proves the desired inequality.  The latter inequality is equivalent to
\[
  5-34x+85x^2-60x^3\ge0,
\]
and this follows from
\[
  85x^2-60x^3\ge61x^2,
  \qquad
  5-34x+61x^2>0.
\]

\subsection{The case \texorpdfstring{$m=4$}{m=4}}

The hard range is $4/7<a<1$.  Put $u=7(1-a)$.  The beta comparison obtained from the base moment is equivalent to
\begin{equation}
\label{eq:T4-app}
 \frac{\lvert\sin(\pi u)\rvert}{\sin(\pi u/7)}
 \ge
 \Phi_4(u)\lvert(1-u)(2-u)(3-u)\rvert,
 \qquad 0<u<3,
\end{equation}
where
\[
 \Phi_4(u)=
 \frac{(21-4u)(49-8u)(7+7u-2u^2)}{3(21-u)(14-u)(7-u)}.
\]
Since $\sin(\pi u/7)\le\pi u/7$, it suffices to prove
\[
 \lvert\sin(\pi u)\rvert
 \ge \frac{\pi u}{7}\Phi_4(u)\lvert(1-u)(2-u)(3-u)\rvert.
\]
We use
\begin{equation}
\label{eq:sine-basic}
  \sin(\pi t)\ge \pi t(1-t)(1+t(1-t)),
  \qquad 0\le t\le1.
\end{equation}
On each interval $(0,1)$, $(1,2)$, and $(2,3)$, set $t=u-k$.  After clearing denominators, \eqref{eq:sine-basic} reduces the desired inequality to the positivity of Bernstein polynomials on $[0,1]$.  The Bernstein coefficients are as follows.

For $0<u<1$, the relevant polynomial $P_0$ of degree $5$ has Bernstein coefficients
\[
39984,\ \frac{188629}{5},\ \frac{334579}{10},\ \frac{140213}{5},\ 22092,\ 16032.
\]
For $1<u<2$, with $v=u-1$, the degree $6$ polynomial has Bernstein coefficients
\[
16032,\ 18998,\ \frac{301594}{15},\ \frac{396561}{20},\ \frac{92067}{5},\ \frac{48064}{3},\ 12786.
\]
For $2<u<3$, with $v=u-2$, the degree $6$ polynomial has Bernstein coefficients
\[
12786,\ \frac{41438}{3},\ \frac{65563}{5},\ \frac{225729}{20},\ \frac{26452}{3},\ 6033,\ 3132.
\]
All are positive, and hence \eqref{eq:T4-app} holds.

\subsection{The case \texorpdfstring{$m=5$}{m=5}}

The hard range is $5/9<a<1$.  Here the one-step chord bound is no longer sharp enough.  A second-order Bernstein majorant is used.  The base moment reduces to
\begin{align*}
&\B(a+4,1-a)\B(5,10a-5)\\
&\quad\ge \frac{1}{10a-5}\Bigg[
\B(9a-4,10-9a)
+\frac{1}{10a-4}\B(9a-3,10-9a)\\
&\hspace{3.0cm}
+\frac{2}{(10a-4)(10a-3)}\B(9a-2,9-9a)
\Bigg].
\end{align*}
With $u=9(1-a)$, this becomes
\begin{equation}
\label{eq:T5-app}
 \frac{\lvert\sin(\pi u)\rvert}{\sin(\pi u/9)}
 \ge \Psi_5(u),
 \qquad 0<u<4,
\end{equation}
where
\[
\Psi_5(u)=
\frac{\lvert(u-1)(u-2)(u-3)(u-4)\rvert(36-5u)(81-10u)
(46u^3-525u^2+1431u+324)}{24(36-u)(27-u)(18-u)(9-u)}.
\]
Using \eqref{eq:sine-basic} and $\sin(\pi u/9)\le\pi u/9$, the inequality is reduced on each interval $(k,k+1)$, $k=0,1,2,3$, to positivity of a Bernstein polynomial of degree $8$ on $[0,1]$.  The coefficient lists are:
\[
\begin{array}{ll}
 k=0:& 11337408,\ 5900526,\ \frac{31970052}{7},\ \frac{151276977}{28},\ \frac{71019729}{10},\\
& \frac{246828201}{28},\ \frac{280537807}{28},\ \frac{41678983}{4},\ 9881304;\\[2mm]
 k=1:& 9881304,\ \frac{47252753}{4},\ \frac{369718127}{28},\ \frac{98935164}{7},\\
& \frac{1022151211}{70},\ \frac{204135209}{14},\ 14039598,\ 12959132,\ 11339024;\\[2mm]
 k=2:& 11339024,\ 12553672,\ \frac{91789826}{7},\ \frac{184137459}{14},\\
& \frac{25548489}{2},\ \frac{337273767}{28},\ \frac{308252871}{28},\ 9691083,\ 8109396;\\[2mm]
 k=3:& 8109396,\ 8555058,\ \frac{235547271}{28},\ \frac{109522341}{14},\\
& \frac{483925311}{70},\ \frac{23152911}{4},\ \frac{31700180}{7},\ 3190286,\ 1807872.
\end{array}.
\]
Every coefficient is positive.  Therefore, \eqref{eq:T5-app} holds and the base moment for $m=5$ follows.

\section{A shifted certificate for the next case \texorpdfstring{$m=6$}{m=6}}
\label{app:m6}

For $p=12$, the present calculation stops at a shifted certificate.  We include the reduction because it indicates where a moving certificate may be needed.

Let $m=6$ and take
\[
  h_j=\frac{(4)_j}{j!}.
\]
Then
\[
(B_yh)_j=y^j(1-y)^{-4}
\sum_{n=0}^{\infty}
\frac{(j+6)_n(6)_n}{(12a)_n n!}(1-y)^nP_n(y),
\]
where
\[
  P_n(y)={}_2F_1(-n,2;6;y)=20\int_0^1r(1-r)^3(1-yr)^n\,\mathrm{d}r.
\]
The bound
\[
  P_n(y)\le \frac{20}{y^2(n+4)(n+5)}
\]
gives the sufficient reduction, valid for $a<10/11$, to the base moment
\[
\B(a+3,1-a)\B(4,12a-4)
\ge
\int_0^\infty x^{-a}(1+x)^{-4}
\left[\int_0^x\frac{v^{12a-5}}{(1+v)^3}\,\mathrm{d}v\right]\,\mathrm{d}x.
\]
This reduction is too crude near $a=1$ because of the artificial singularity produced by $y^{-2}$.  Deciding whether the beta formula remains valid throughout the $p=12$ range requires the exact positive kernel $P_n(y)$ or a different device, such as the moving quadratic certificate
\[
  h(z)=(1-z)^{-6}(1+\lambda_1(1-z)+\lambda_2(1-z)^2).
\]

\section*{Funding}
The work of Hasi Wulan was supported in part by the National Natural Science Foundation of China (Grant Nos. 12271328 and 12371131).

The work of Mengmeng Zhou was supported in part by the National Natural Science Foundation of China (Grant No. 12371131), the STU Scientific Research Initiation Grant (No. NTF23004), the LKSF STU--GTIIT Joint Research Grant (No. 2024 LKSFG06), and the Guangdong Basic and Applied Basic Research Foundation (Grant No. 2023A1515010614).

The work of Jian-Feng Zhu was supported by the National Natural Science Foundation of China (Grant No. 12271189), the Natural Science Foundation of Guangdong Province (Grant Nos. 2024A1515010467 and 2026A1515012333), the STU Scientific Research Initiation Grant (No. NTF25017T), and the Fujian Alliance of Mathematics (Grant No. 2023SXLMMS07).

\section*{Data availability}
No datasets were generated or analyzed during the current study.

\section*{Conflicts of interest}
The authors declare that they have no financial or non-financial conflicts of interest that are directly or indirectly related to the work submitted for publication.

\section*{AI declaration}
The counterexample in Theorem \ref{thm:counterexample}, based on the test function
\[
 f_0(z)=(1-z^2)^{-4/5}
 =\sum_{k=0}^{\infty}\frac{(4/5)_k}{k!}z^{2k},
\]
was constructed with the assistance of OpenAI Codex.  The authors independently verified the argument and take full responsibility for the final manuscript.

\end{document}